\numberwithin{equation}{section}
\def\blfootnote{\xdef\@thefnmark{}\@footnotetext}
\theoremstyle{plain}
\newtheorem{theorem}{Theorem}[section]
\newtheorem{corollary}[theorem]{Corollary}
\newtheorem{lemma}[theorem]{Lemma}
\newtheorem{definition}[theorem]{Definition}
\newtheorem{conjecture}[theorem]{Conjecture}
\newcommand*{\defeq}{\mathrel{\vcenter{\baselineskip0.5ex \lineskiplimit0pt
 \hbox{\scriptsize.}\hbox{\scriptsize.}}}=}
\newtheorem*{remark}{Remark}
\newtheorem*{thm}{Theorem A}
\newtheorem{observation}[theorem]{Observation}
\theoremstyle{remark}
\newtheorem*{claim}{Claim}
\newcommand{\C}{{\mathbb{C}}}
\newcommand{\R}{{\mathbb{R}}}
\newcommand{\Z}{{\mathbb{Z}}}
\newcommand{\N}{{\mathbb{N}}}
\newcommand{\M}{{\mathcal{M}}}
\newcommand{\qfor}{\quad\text{for }}
\newenvironment{subproof}{\begin{proof}[Proof of claim]}{%
               \end{proof}}
\newcommand{\hp}{\hat{p}}
\newcommand{\tq}{\tilde{q}}
\newcommand{\hz}{\hat{z}}
\newcommand{\id}{{\mu_p}}
\newcommand{\hid}{{\mu_{\hp}}}
\newcommand{\tk}{t_{\operatorname{kmax}}}
\begin{document}
\title[On a result of Hayman]{On a result of Hayman concerning \\ the  maximum modulus set}
\author[{V. Evdoridou \and L. Pardo-Sim\'on \and D. J. Sixsmith}]{Vasiliki Evdoridou \and Leticia Pardo-Sim\'on \and David J. Sixsmith}
\address{School of Mathematics and Statistics\\ The Open University\\
Milton Keynes MK7 6AA\\ UK\textsc{\newline \indent \href{https://orcid.org/0000-0002-5409-2663}{\includegraphics[width=1em,height=1em]{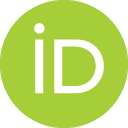} {\normalfont https://orcid.org/0000-0002-5409-2663}}}}
\email{vasiliki.evdoridou@open.ac.uk}
\address{Institute of Mathematics of the Polish Academy of Sciences\\ ul. \'Sniadeckich~8\\
00-656 Warsaw\\ Poland\textsc{\newline \indent \href{https://orcid.org/0000-0003-4039-5556}{\includegraphics[width=1em,height=1em]{orcid2.png} {\normalfont https://orcid.org/0000-0003-4039-5556}}}
}
\email{l.pardo-simon@impan.pl}
\address{School of Mathematics and Statistics\\ The Open University\\
Milton Keynes MK7 6AA\\ UK\textsc{\newline \indent \href{https://orcid.org/0000-0002-3543-6969}{\includegraphics[width=1em,height=1em]{orcid2.png} {\normalfont https://orcid.org/0000-0002-3543-6969}}}}
\email{david.sixsmith@open.ac.uk}
\dedicatory{This paper is dedicated to the memory of Professor W. K. Hayman}
\thanks{The first author was supported by Engineering and Physical Sciences Research Council grant EP/R010560/1.\\ 2010 Mathematics Subject Classification. Primary 30D15.\vspace{3pt}\\ Key words: entire functions, maximum modulus. }
\begin{abstract}
The set of points where an entire function achieves its maximum modulus is known as the \emph{maximum modulus set}. In 1951, Hayman studied the structure of this set near the origin. Following work of Blumenthal, he showed that, near zero, the maximum modulus set consists of a collection of disjoint analytic curves, and provided an upper bound for the number of these curves. In this paper, we establish the exact number of these curves for all entire functions, except for a ``small'' set whose Taylor series coefficients satisfy a certain simple, algebraic condition. 

Moreover, we give new results concerning the structure of this set near the origin, and make an interesting conjecture regarding the most general case. We prove this conjecture for polynomials of degree less than four.
\end{abstract}
\maketitle

\section{Introduction}
Suppose that $f$ is an entire function, and define the \emph{maximum modulus} by
\[
M(r, f) \defeq \max_{|z| = r} |f(z)|, \qfor r \geq 0.
\]
In the notation of \cite{Sixsmithmax}, the set of points where $f$ achieves its maximum modulus, which we call the \emph{maximum modulus set}, is denoted by $\M(f)$. In other words,
\begin{equation}
\label{Mdef}
\M(f) \defeq \{ z \in \C \colon |f(z)| = M(|z|, f) \}.
\end{equation} 

If $f(z) \defeq cz^n$, for $c \in \C\setminus\{0\}$ and $n \geq 0$, then $\M(f) = \C$. Otherwise $\M(f)$ consists of a union of closed \emph{maximum curves}, which are analytic except at their endpoints; see \cite[Theorem 10]{valironlectures} or \cite{Blumenthal}. Many authors have studied the maximum modulus set; see, for example, \cite{hardy1909, jassimlondon, letidave, letidavenew, tyler,Csordas_1990,Blu_conj}. The maximum modulus set of two cubic polynomials is shown in Figure \ref{fig1}.

It is a simple observation that if $a \ne 0$, $m \in \Z$, and $\tilde{f}(z) \defeq a z^m f(z)$ for entire functions $\tilde{f}$ and $f$, then $\M(\tilde{f}) = \M(f)$. Thus, following Hayman \cite{Hayman}, we will assume that $f$ has the form
\begin{equation}
\label{eq:p}
f(z) \defeq 1 + a z^k + \text{higher order terms}, \qfor a \ne 0, \text{ and } k \in \N.
\end{equation}
Throughout the paper $f$ always has this form, and, in particular, the variables $a$ and $k$ are fixed by this equation.

We are interested in the structure of $\M(f)$ near the origin. Hayman \cite[Theorem I part (iii)]{Hayman} proved the following.
\begin{thm}
If $f$ is an entire function of the form \eqref{eq:p}, then, near the origin, $\M(f)$ consists of at most $k$ analytic curves only meeting at zero. Moreover, for any two of these curves there exists $m\in \Z$ such that the curves make an angle of $2m\pi/k$ with each other.
\end{thm}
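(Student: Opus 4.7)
The plan is to characterize $\M(f)$ near the origin as the solution set of a single real-analytic equation, apply the analytic implicit function theorem to produce the curves, and then separate maxima from minima.

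First, since $f(0) = 1 \ne 0$, $f$ has no zeros on some disk $\{|z| < \rho\}$, and a holomorphic branch of $\log f$ exists there. Setting $u(z) \defeq \log|f(z)|$, a direct Cauchy--Riemann computation in polar coordinates $z = re^{i\theta}$ yields
\[
\frac{\partial u}{\partial \theta}(re^{i\theta}) = -\operatorname{Im}\bigl(G(re^{i\theta})\bigr), \qquad \text{where } G(z) \defeq \frac{zf'(z)}{f(z)} = kaz^k + O(z^{k+1}).
\]
Any point of $\M(f)\setminus\{0\}$ near $0$ must satisfy $\partial u/\partial\theta = 0$ and be a $\theta$-maximum of $u$. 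Writing $a = |a|e^{i\alpha}$ and factoring $r^k$ out of $\operatorname{Im}(G(re^{i\theta}))$, I would obtain a real-analytic function in $(r,\theta)$,
\[
\Psi(r,\theta) \defeq k|a|\sin(k\theta + \alpha) + r\,\tilde H(r,\theta),
\]
whose zero set carries all the critical information.

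The next step is the analytic implicit function theorem. At $r=0$ the zeros of $\Psi$ in $\theta \in [0,2\pi)$ are the $2k$ angles $\theta_j \defeq (j\pi-\alpha)/k$, $j = 0,\dots,2k-1$, and at each such point $\partial_\theta \Psi(0,\theta_j) = (-1)^j k^2|a| \ne 0$. Thus for each $j$ I obtain a unique real-analytic function $\theta_j : [0,\delta) \to \R$ with $\theta_j(0) = (j\pi-\alpha)/k$ and $\Psi(r,\theta_j(r)) \equiv 0$; the curve $\gamma_j(r) \defeq re^{i\theta_j(r)}$ is then real-analytic at $0$ with tangent direction $e^{i\theta_j(0)}$. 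Of these $2k$ critical curves, the $k$ with even $j$ are $\theta$-maxima of $u$ (using the expansion $u(re^{i\theta}) = |a|r^k\cos(k\theta+\alpha) + O(r^{k+1})$) and hence belong to $\M(f)$, while the $k$ with odd $j$ are $\theta$-minima and do not. The values $\theta_j(0)$ are pairwise distinct modulo $2\pi$, so the curves meet only at $0$, and between any two maximum curves the tangent angle at $0$ is $\theta_{2m}(0) - \theta_{2m'}(0) = 2(m-m')\pi/k$, proving the angle assertion.

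The main obstacle is \emph{completeness}: I must show that no other points of $\M(f)$ near $0$ exist beyond the $2k$ graphs constructed above. This should follow from the uniqueness clause in the implicit function theorem applied locally near each $(0,\theta_j)$, together with a compactness argument on the complement: $|\Psi(0,\theta)|$ is bounded below on $[0,2\pi)$ away from small neighborhoods of the $\theta_j$, so by continuity $\Psi(r,\theta) \ne 0$ there for all sufficiently small $r$. A secondary technical matter is verifying that each $\gamma_j$ is an \emph{analytic} curve at $z=0$ in the sense of the theorem, which I expect to handle by observing that $r \mapsto re^{i\theta_j(r)}$ has a convergent power series in $r$ with nonzero linear term and hence parametrizes an analytic arc through the origin.
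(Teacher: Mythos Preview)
Your overall strategy---extract the leading term of the critical equation, apply the analytic implicit function theorem, then sort maxima from minima---is essentially the paper's approach in Section~\ref{S.th1}. The paper works with $|f|^2$ and its $\theta$-derivatives directly (equations~\eqref{eq:partial} and~\eqref{eq_secondpartial}), whereas you work with $\log|f|$ via $G(z)=zf'(z)/f(z)$; these are equivalent, and your formulation is perhaps slightly cleaner.

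However, there is a genuine error and a genuine gap.

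\textbf{The error.} You write that the $k$ curves with even $j$ ``are $\theta$-maxima of $u$ \ldots\ and hence belong to $\M(f)$''. This inference is false: a \emph{local} maximum of $\theta\mapsto|f(re^{i\theta})|$ need not be a \emph{global} maximum on the circle $|z|=r$, and $\M(f)$ is defined via the global maximum. Indeed, the entire point of the paper (Theorems~\ref{th:2new} and~\ref{th:1}) is that typically only $\mu_f$ of these $k$ local-maximum curves lie in $\M(f)$, not all $k$. So your argument as written claims ``exactly $k$'' curves, which is wrong; Theorem~A asserts only ``at most $k$''.

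\textbf{The gap.} Once you drop the false claim, your argument shows only the containment
\[
\M(f)\cap\{0<|z|\le R\}\ \subset\ \bigcup_{j\text{ even}}\gamma_j,
\]
not that $\M(f)$ near $0$ \emph{equals} a union of some subcollection of the $\gamma_j$. A priori $\M(f)$ could meet a given $\gamma_j$ in a complicated set (say, a sequence of arcs accumulating at $0$), and then ``$\M(f)$ consists of analytic curves'' would fail. Ruling this out requires a further structural argument: one must show that for each $j$ either $\gamma_j\subset\M(f)$ near $0$ or $\gamma_j\cap\M(f)=\{0\}$ near $0$. The paper does not prove this from scratch but invokes Blumenthal's results (see the paragraph after~\eqref{eq_rev}, citing \cite{Blumenthal} and \cite[Section~3]{letidavenew}) to pass from the containment to the equality~\eqref{eq:Mindisc}. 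Your proposal does not address this step at all.
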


In this paper, we strengthen Hayman's result by giving the exact number of such curves for any entire function outside an exceptional set. To give a precise statement of this set, we require the following definitions, the first of which is straightforward.

\begin{definition}
Let $f$ be an entire function  of the form \eqref{eq:p}. We define the \emph{inner degree} of $f$ as the maximal $\mu\defeq \mu_f\in \N$ such that $f(z) = \tilde{f}(z^{\mu})$ for some  entire function $\tilde{f}$.
\end{definition}
Note that in fact $\mu$ is the greatest common divisor of $\{ n>0 \colon f^{(n)}(0)\neq 0\}$, and so it always divides $k$. The second definition is more complicated. Suppose that $f$ is an entire function of the form \eqref{eq:p}, so that we can write
\[
f(z) \defeq 1 + az^k + \sum_{\sigma=k+1}^{\infty} b_\sigma z^{\sigma}.
\]
Let $p_k(z) \defeq 1 + az^k$, and for each $n> k$, define $p_n(z) \defeq 1 + az^k + \sum_{\sigma=k+1}^{n} b_\sigma z^{\sigma}$. It is immediate that there is some least $N \geq k$ such that $\mu_{p_{N}} = \mu_f$. We then say that $p_{N}$ is the \textit{core polynomial} of $f$. Moreover, we stress that $f$ may itself be a polynomial, and it is possible that $p_{N}= f$.
\begin{definition}\label{def:exceptional}
Suppose $f$ is an entire function of the form \eqref{eq:p}, and let $N$ be as defined above. We say that $f$ is \emph{exceptional} if there exist $m \in \{1, \ldots,2k-3\} $, $m' \in \Z$, and $\sigma \in \{k+1, \ldots, N\}$, such that $b_\sigma \ne 0$ and also
\begin{equation}
\label{eq:notmagic}
m\pi = \frac{k}{\sigma}(m'\pi - \arg b_\sigma) + \arg a. 
\end{equation}
\end{definition}
Observe that it is straightforward to determine if an entire function is exceptional, simply by examining the coefficients in its Taylor series. Indeed, we only need to check finitely many such coefficients even when $f$ is transcendental. Note also that no polynomial $p$ with only two terms is exceptional; indeed, it is easy to explicitly check the conclusion of Theorem~\ref{th:2new}, below, in this case.

Our first result establishes the number of curves that form $\M(f)$ near the  origin for any $f$ that is not exceptional.
\begin{theorem} 
\label{th:2new}
Let $f$ be an entire function of the form \eqref{eq:p} that is not exceptional. Then, near the origin, $\M(f)$ consists of exactly $\mu_f$ analytic curves that only meet at zero.
\end{theorem}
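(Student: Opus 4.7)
The plan is to reduce first to the case $\mu_f = 1$ (where we need only show a single maximum curve) and then use the non-exceptional hypothesis to induce a strict total order on Hayman's $k$ candidate tangent directions. For the reduction, write $f(z) = \tilde{f}(z^\mu)$ with $\mu = \mu_f$; since $z \in \M(f)$ iff $z^\mu \in \M(\tilde{f})$, each analytic curve in $\M(\tilde f)$ near $0$ lifts to exactly $\mu$ analytic curves in $\M(f)$ near $0$. Moreover $\tilde f$ has inner degree $1$ and inherits non-exceptionality: the equation \eqref{eq:notmagic} translates verbatim under the rescaling $k \mapsto k/\mu$, $\sigma \mapsto \sigma/\mu$, and the admissible range $\{1,\dots,2k/\mu - 3\}$ is contained in $\{1,\dots,2k-3\}$. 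So it suffices to show that $\M(f)$ near $0$ consists of a single analytic curve whenever $\mu_f = 1$.

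Assume henceforth $\mu_f = 1$. By Hayman's theorem every maximum curve of $f$ near $0$ is tangent to one of the $k$ rays $\theta_j \defeq (2j\pi - \arg a)/k$, $j = 0, \dots, k-1$ (the leading term $2|a|r^k\cos(k\theta + \arg a)$ of $|f(re^{i\theta})|^2 - 1$ is maximised exactly at these directions). For each $j$ and each sufficiently small $r > 0$, the function $\theta \mapsto |f(re^{i\theta})|^2$ has a unique local maximum $V_j(r)$ in a small neighborhood of $\theta_j$ (its angular second derivative at $\theta_j$ is $-2|a|r^k k^2 + O(r^{\sigma_1})$, where $\sigma_1 \defeq \min\{\sigma > k : b_\sigma \neq 0\}$). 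The ray $\theta_j$ supports a curve in $\M(f)$ precisely when $V_j(r) = \max_{j'} V_{j'}(r)$ along some sequence $r_n \to 0^+$, so it suffices to prove this maximum is attained at a unique $j$.

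To compare the $V_j$'s, set $\theta = \theta_j + \eta$, Taylor-expand in $\eta$, and apply the quadratic-maximum formula $V = A + C^2/(4|B|)$ to obtain $V_j(r) = A_j(r) + E_j(r)$, with
\[
A_j(r) = 1 + 2|a|r^k + |a|^2 r^{2k} + 2\sum_{\sigma > k}r^\sigma |b_\sigma|\cos(\sigma\theta_j + \arg b_\sigma) + O(r^{k+\sigma_1})
\]
and $E_j(r) = O(r^{2\sigma_1 - k})$ the perturbation correction from the shifted optimum. Writing $\phi_\sigma^j \defeq \sigma\theta_j + \arg b_\sigma$, the product-to-sum identity
\[
\cos\phi_\sigma^j - \cos\phi_\sigma^{j'} = -2\sin\!\Bigl(\tfrac{\phi_\sigma^j + \phi_\sigma^{j'}}{2}\Bigr)\sin\!\Bigl(\tfrac{\phi_\sigma^j - \phi_\sigma^{j'}}{2}\Bigr)
\]
isolates two possible causes of vanishing. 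The first sine is zero precisely when \eqref{eq:notmagic} holds with $m = j+j'$; since distinct $j, j' \in \{0,\dots,k-1\}$ give $j+j' \in \{1,\dots,2k-3\}$, non-exceptionality rules this out for every $\sigma \in \{k+1,\dots,N\}$ with $b_\sigma \neq 0$. Hence for such $\sigma$, $\cos\phi_\sigma^j = \cos\phi_\sigma^{j'}$ iff $k \mid \sigma(j-j')$, in which case also $\phi_\sigma^j \equiv \phi_\sigma^{j'} \pmod{2\pi}$, so $\sin\phi_\sigma^j = \sin\phi_\sigma^{j'}$ and the cross-term contributions at that index match as well.

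Since $\mu_f = \gcd(k, \{\sigma \leq N : b_\sigma \neq 0,\ \sigma > k\}) = 1$, for any distinct $j,j'$ the condition $k \mid \sigma(j-j')$ cannot hold for every such $\sigma$ (else $k \mid (j-j')$, absurd). Let $\sigma_*$ be the smallest $\sigma \in \{k+1, \dots, N\}$ with $b_{\sigma_*} \neq 0$ and $k \nmid \sigma_*(j-j')$. All direct and cross contributions to $A_j - A_{j'}$ and $C_j - C_{j'}$ at indices below $\sigma_*$ vanish by the previous paragraph, and a short calculation using $\sigma_1 > k$ yields $E_j - E_{j'} = O(r^{\sigma_* + \sigma_1 - k}) = o(r^{\sigma_*})$. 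Therefore
\[
V_j(r) - V_{j'}(r) = 2 r^{\sigma_*} |b_{\sigma_*}|\bigl(\cos\phi_{\sigma_*}^j - \cos\phi_{\sigma_*}^{j'}\bigr) + o(r^{\sigma_*})
\]
has a definite sign on some $(0, r_0)$. This defines a strict total order on $\{0,\dots,k-1\}$, whose unique maximiser $j_*$ yields the single maximum curve; combined with the reduction, $\M(f)$ near $0$ consists of exactly $\mu_f$ analytic curves. The main technical obstacle is the perturbation bookkeeping: verifying that every collected error term (cross contributions, the tail of the series beyond $\sigma = N$, and the angular-shift correction $E_j$) is strictly lower order than the claimed $r^{\sigma_*}$ leading contribution.
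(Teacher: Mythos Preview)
Your approach is correct in outline and genuinely different from the paper's. The paper never reduces to $\mu_f=1$; instead it works with the core polynomial $p$ of $f$ and proves by induction on the number of nonzero terms of $p$ that $J_p=[j]_p$ for a single equivalence class (Theorem~\ref{th:2}), together with a quantitative gap estimate \eqref{eq:condition}. Each inductive step strips off the top monomial $bz^n$ and uses a delicate comparison (Lemma~\ref{lemm:phatisnice}) between the local maxima of $|p|$ and $|\hat p|$; Lemma~\ref{lemm:general} then transfers the conclusion from $p$ to $f$. Your route---pairwise comparison of the $k$ local maxima via the first ``separating'' index $\sigma_*=\sigma_*(j,j')$---is more direct and avoids the induction entirely; the reduction to $\mu_f=1$ is a clean simplification the paper does not make. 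Both arguments invoke the non-exceptional hypothesis at the same point: to exclude the possibility $(j+j')\pi=\tfrac{k}{\sigma}(m'\pi-\arg b_\sigma)+\arg a$, which is exactly the paper's Lemma~\ref{lemm:equals}/\eqref{poss2}.

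The gap you flag is real but easily closed, and your quadratic-maximum heuristic is not the cleanest way to close it. The formula $V_j=A_j+C_j^2/(4|B_j|)$ is only a second-order approximation; the cubic and higher corrections are individually of size $O(r^{3\sigma_1-2k})$, which need not be $o(r^{\sigma_*})$. The right observation is the one implicit in your sentence ``all direct and cross contributions \dots at indices below $\sigma_*$ vanish'': if $g$ denotes the truncation of $f$ at degree $\sigma_*-1$, then $k\mid\sigma(j-j')$ for every index $\sigma$ appearing in $g$ forces $e^{id\theta_j}=e^{id\theta_{j'}}$ for $d=\gcd$ of those indices, and hence $|g(re^{i(\theta_j+\eta)})|\equiv|g(re^{i(\theta_{j'}+\eta)})|$ \emph{identically} in $\eta$. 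Thus the local maxima of $|g|^2$ near $\theta_j$ and $\theta_{j'}$ coincide exactly (same location $\hat\eta$, same value), and writing $|f|^2=|g|^2+H$ with $H=O(r^{\sigma_*})$ one gets $V_j-V_{j'}=H_j(\hat\eta)-H_{j'}(\hat\eta)+O(r^{2\sigma_*-k})$, from which your displayed asymptotic for $V_j-V_{j'}$ follows with error $O(r^{\sigma_*+1/2})$ (using $\hat\eta=O(r^{1/2})$ from Theorem~\ref{th:1}\ref{th:tangent}). This is exactly the mechanism behind the paper's Lemma~\ref{lemm:phatisnice} and Lemma~\ref{lem_newtj}, packaged differently.
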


\begin{remark}\normalfont
Note that Theorem \ref{th:2new} tells us, in a precise sense, that for ``most'' entire functions, $f$, the set $\M(f)$ has $\mu_f$ components near the origin. 
For, if $f$ is exceptional, then any sufficiently small perturbation of finitely many of its coefficients gives rise to  an entire function that is not exceptional.
\end{remark}

In addition, we are able to provide in Theorem \ref{th:1} more information on the number and asymptotic behaviour of the curves that make up $\M(f)$ near the origin, for any entire function $f$. Set $\Sigma\defeq  \{0,\ldots, k-1\}$. For each $j \in \Sigma$, define the angle
\begin{equation}
\label{eq:angledef}
\omega_j \defeq \frac{2j\pi-\arg a}{k},
\end{equation}
and sectors,
\begin{equation}
\label{eq:sectordef}
S_j(r,\phi)\defeq \{ z \in \C : 0 < |z| \leq r \text{ and } \left|\arg z - \omega_j\right| < \phi \}, \qfor \phi, r>0.
\end{equation}
For a finite set $A$, we use $\# A$ to denote the number of elements of $A$. For an entire function $f$ and a set $T \subset \C$ we set
\[
\M(f\vert_{T}) = \{ z \in T : |f(z)| = \max_{w \in T : |w| = |z|} |f(w)| \}.
\]
\begin{theorem}
\label{th:1}
Suppose that $f$ is an entire function of the form \eqref{eq:p}. Then there exist $R > 0$, a set $J\defeq J_f\subset \Sigma$, and disjoint analytic curves $\{\gamma_j\}_{j\in \Sigma}$ such that 
\begin{equation}
\label{eq:Mindisc}
\M(f) \cap \{ z : 0<|z| \leq R \} = \bigcup_{j \in J} \gamma_j, 
\end{equation}
and with the following properties.
\begin{enumerate}[(a)]
\item\label{item:localmax} There exists $\phi>0$ such that, for each $j\in \Sigma$, $\gamma_j= \M(f\vert_{S_{j}(R,\phi)})$. 
\item\label{th:curves} Each $\gamma_j$ contains exactly one point of each positive modulus less than or equal to~$R$. 
\item Each $\gamma_j$ is tangent at the origin to the ray $\{ z \in \C : \arg z = \omega_j\}$. In particular, $\arg z = \omega_j + O(|z|^{1/2})$ as $z \rightarrow 0$ along $\gamma_j$. \label{th:tangent}

\item\label{th:number} The cardinal $\#J$ is a multiple of the inner degree $\mu$ of $f$. Moreover, if $j,j'\in \Sigma$ and $j' = j + mk/\mu$ with $m\in \N$ so that $0 \leq m<\mu$, then $\gamma_{j'}=e^{2\pi i m/\mu}\gamma_j$, and $j'$ is in $J$ if and only if $j \in J$.
\end{enumerate}
\end{theorem}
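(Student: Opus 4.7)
The plan is to analyze the real analytic function $F(r, \theta) \defeq |f(re^{i\theta})|^2$ on $[0, \infty) \times \R$ and pin down its local angular maxima for small $r$. Expanding gives $F(r, \theta) = 1 + 2|a|r^k \cos(k\theta + \arg a) + \sum_{\sigma > k} r^\sigma c_\sigma(\theta)$, where each $c_\sigma$ is a trigonometric polynomial built from the Taylor coefficients of $f$. At the leading angular level, $\cos(k\theta + \arg a)$ attains its maximum exactly at the $k$ angles $\theta = \omega_j$, $j \in \Sigma$. The strategy is to produce for each such $j$ a local maximum curve $\gamma_j$ emanating from the origin via the analytic implicit function theorem, to show that $\M(f)$ near $0$ consists of precisely those $\gamma_j$ whose local maximum value matches the global maximum on each small circle, and finally to extract part (\ref{th:number}) from the rotational symmetry coming from the inner degree.

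For the construction, fix $j \in \Sigma$, set $\theta = \omega_j + \eta$, and divide $\partial_\theta F$ by $r^k$; since every summand in $\partial_\theta F$ carries a factor of at least $r^k$, the quotient extends to a real analytic function
\[
\Psi_j(r, \eta) = -2k|a|\sin(k\eta) + r\, G_j(r, \eta),
\]
with $G_j$ real analytic (using that $k\omega_j + \arg a = 2j\pi$). One has $\Psi_j(0,0) = 0$ and $\partial_\eta \Psi_j(0,0) = -2k^2|a| \neq 0$, so the analytic implicit function theorem gives a unique real analytic function $\eta_j(r)$ with $\eta_j(0) = 0$ and $\Psi_j(r, \eta_j(r)) \equiv 0$ on some $[-R, R]$. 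Define $\gamma_j \defeq \{r \exp(i(\omega_j + \eta_j(r))) : 0 \leq r \leq R\}$. Since $\eta_j$ is real analytic with $\eta_j(0) = 0$, we have $\eta_j(r) = O(r) = O(r^{1/2})$, giving part (\ref{th:tangent}); part (\ref{th:curves}) is immediate from the parametrization. Shrinking $R$ if needed, pick $\phi \in (0, \pi/k)$ with $|\eta_j(r)| < \phi$ for all $r \in (0,R]$ and all $j$. Comparing
\[
F(r, \omega_j + \eta_j(r)) = 1 + 2|a|r^k + O(r^{k+1}) \quad \text{and} \quad F(r, \omega_j \pm \phi) = 1 + 2|a|r^k \cos(k\phi) + O(r^{k+1}),
\]
together with a similar bound outside $\bigcup_j S_j(R, \phi)$, shows that $\gamma_j$ is the unique local maximum curve in $S_j(R, \phi)$ and that these curves contain every maximum of $F(r, \cdot)$ on small circles; this gives (\ref{item:localmax}) and the inclusion $\M(f) \cap \{0 < |z| \leq R\} \subseteq \bigcup_{j \in \Sigma} \gamma_j$.

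To identify $J$, each $M_j(r) \defeq F(r, \omega_j + \eta_j(r))$ is real analytic in $r$, so for distinct $j, j' \in \Sigma$ either $M_j \equiv M_{j'}$ near $0$ or $M_j - M_{j'}$ has only an isolated zero at $0$; after shrinking $R$ once more, the relative ordering of $\{M_j(r)\}_{j \in \Sigma}$ is constant on $(0, R]$, and $J$ is the set of indices attaining the maximum, yielding \eqref{eq:Mindisc}. For (\ref{th:number}), the identity $f(z) = \tilde f(z^\mu)$ gives $f(e^{2\pi i/\mu}z) = f(z)$, so $\M(f)$ is invariant under rotation by $e^{2\pi i/\mu}$. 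Since $\mu$ divides $k$, this rotation carries $S_j(R, \phi)$ onto $S_{j + k/\mu}(R, \phi)$ (indices mod $k$), and by uniqueness of the local maximum curve in each sector it carries $\gamma_j$ to $\gamma_{j + k/\mu}$ and preserves $J$. The orbits of $j \mapsto j + k/\mu$ on $\Sigma$ all have size $\mu$, so $\#J$ is a multiple of $\mu$. The main technical hurdle is the initial division of $\partial_\theta F$ by $r^k$: this is what removes the degeneracy at $(0, \omega_j)$ that obstructs a direct application of the implicit function theorem, and relies on the termwise convergence of the power series for $|f|^2$, which is guaranteed because $f$ is entire.
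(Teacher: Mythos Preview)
Your argument is correct and follows the same overall framework as the paper: expand $|f(re^{i\theta})|^2$, use the leading $\cos(k\theta+\arg a)$ term to locate the sectors, produce one local-maximum curve per sector, restrict to those indices $j$ whose curve carries the global maximum, and read off part~(d) from the $e^{2\pi i/\mu}$-symmetry. Two points of your write-up differ from the paper in a useful way.

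First, the paper obtains the curves by citing Hayman and Blumenthal for the implicit function theorem step, and then proves the tangency estimate $\theta_r=O(r^{1/2})$ separately, by comparing $|f(re^{i\omega_j})|^2$ with $|f(z_j(r))|^2$ and using $1-\cos(k\theta_r)\ge (k\theta_r)^2/3$. Your renormalisation $\Psi_j=r^{-k}\partial_\theta F$ makes the analytic implicit function theorem apply directly at $(0,0)$ and yields a real-analytic $\eta_j$ with $\eta_j(0)=0$, hence the stronger bound $\eta_j(r)=O(r)$; this already contains the $O(r^{1/2})$ statement. Second, the paper deduces the existence of the index set $J$ and the radius $R$ from Blumenthal's structural results on $\M(f)$, whereas your argument is self-contained: the local maximum values $M_j(r)$ are real analytic, so any pair either agrees identically or has a fixed sign on a punctured neighbourhood of $0$, which pins down $J$ without external input.

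One small point to tighten: to conclude that $\gamma_j$ is the \emph{unique} angular critical curve in $S_j(R,\phi)$ you should take $\phi<\pi/(2k)$ rather than merely $\phi<\pi/k$, so that $\partial_\eta\Psi_j(r,\eta)=-2k^2|a|\cos(k\eta)+O(r)$ stays negative throughout $|\eta|\le\phi$ and $\Psi_j(r,\cdot)$ is strictly monotone there. With that choice the uniqueness of the critical point, the identification $\gamma_j=\M(f\vert_{S_j(R,\phi)})$, and the containment $\M(f)\cap\{0<|z|\le R\}\subset\bigcup_j\gamma_j$ all follow cleanly.
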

We remark that Theorem \ref{th:1} is not completely new; Blumenthal's results (see \cite[II.3]{valironlectures}) imply that near the origin, $\M(f)$ is a finite collection of closed analytic curves. Both the upper bound on the number of curves in \eqref{eq:Mindisc}, and the first part of \ref{th:tangent}, appeared in \cite[Theorem 1]{Hayman}. However, we obtain more explicit estimates and include proofs for completeness.
\begin{remark}\normalfont
Theorem~\ref{th:1}\ref{th:number} implies the following. 
The components of $\M(f)$ near the origin are contained in a disjoint union of families of analytic curves. Each family contains $\mu_f$ such curves, and the curves within each family are obtained from each other by rotations of $2\pi/\mu_f$ radians around the origin. There is at least one of these families, and at most $k/\mu_f$.
\end{remark}

Observe that for an entire function $f$, Theorem \ref{th:1} states in particular that the number of components of $\M(f)$ near the origin is at least its inner degree, that is, $\#J_f \geq \mu_f$. We distinguish the case of strict inequality.
\begin{definition} 
We say that an entire function $f$ of the form \eqref{eq:p} is \emph{magic} if $\#J_f> \mu_f$.
\end{definition}

Theorem~\ref{th:2new} tells us that all magic entire functions are exceptional. The simplest example of a polynomial that is magic seems to be the cubic
\[
p(z) \defeq 1 + z^2 + iz^3, 
\]
see Figure~\ref{fig1} and also Theorem~\ref{th:3}. It is an open question to identify necessary and sufficient conditions for an entire function to be magic. It is also an open question to establish the size of $\#J_f$ in the case where $f$ is magic. We conjecture the following, which, if true, would give a complete answer to the question of the number of disjoint curves in $\M(f)$ near the origin.
\begin{conjecture}
\label{con1}
If $f$ is magic, then $\#J_f = 2\mu_f$.
\end{conjecture}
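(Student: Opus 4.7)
The plan is to verify Conjecture~\ref{con1} for all polynomials $p$ of degree less than four via a short case analysis on the parameter $k$. Since $p$ has the form \eqref{eq:p} and $\deg p \leq 3$, one has $k \in \{1, 2, 3\}$, which I would treat separately.

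If $k = 1$, then $\Sigma = \{0\}$ forces $\#J \leq 1$, while $\mu$ divides $k = 1$, so $\mu = 1$; hence $p$ cannot be magic. If $k = 3$, the bound $\deg p \leq 3$ forces $p = 1 + az^3$, which has only two terms, so by the remark following Definition~\ref{def:exceptional} together with Theorem~\ref{th:2new} we obtain $\#J = \mu = 3$, so $p$ is again not magic. The remaining case is $k = 2$, where $p = 1 + az^2 + bz^3$ for some $b \in \C$. If $b = 0$, then $p$ has two terms, so $\#J = \mu = 2$ and $p$ is not magic. If $b \neq 0$, then $\mu = \gcd(2, 3) = 1$ and $J \subseteq \Sigma = \{0, 1\}$ yields $\#J \leq 2$; ``$p$ magic'' therefore means $\#J > \mu = 1$, so automatically $\#J = 2 = 2\mu$, as claimed.

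What makes this low-degree case tractable is the trivial bound $\#J \leq k$ inherited from $J \subseteq \Sigma$, which is too small to allow $\#J > 2\mu$ in any $(k, \mu)$ range that actually produces a magic polynomial. The main obstacle to extending the argument is that this bound is useless for larger $k$, and one genuinely needs to rule out the existence of three or more equally maximal orbits among the curves $\gamma_j$. A plausible route to the full conjecture would be to expand $|f(\gamma_j(r))|^2$ asymptotically along each local maximum curve $\gamma_j$ of Theorem~\ref{th:1} and isolate the leading sequence of real coefficients (the ``signature'') that determines membership in $J$. By Theorem~\ref{th:1}\ref{th:number}, signatures are constant on orbits under $j \mapsto j + k/\mu$, so the content of Conjecture~\ref{con1} reduces to showing that no three distinct orbits can simultaneously share the maximal signature. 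Producing such a combinatorial obstruction, sensitive to the resonance condition \eqref{eq:notmagic} defining exceptional functions, appears to be the hard heart of the problem and is what leaves the general conjecture open.
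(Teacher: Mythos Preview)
Your argument is correct and, for the limited goal of verifying the conjecture for $\deg p\le 3$, it is actually cleaner than what the paper does. The paper derives the corollary as a by-product of Theorem~\ref{th:3}: it first \emph{classifies} the magic cubics (those of the form $1+az^2+bz^3$ with $\operatorname{Re}(ba^{-3/2})=0$) by an explicit computation of $|q(z)|^2-|q(-\bar z)|^2$, and in the course of that computation observes that the magic ones have exactly two components near the origin, i.e.\ $\#J=2=2\mu$. You bypass the classification entirely: in the only configuration that could conceivably be magic, namely $k=2$ and $b\ne 0$, the crude bound $\#J\le\#\Sigma=k=2$ combined with $\mu=\gcd(2,3)=1$ already pins down $\#J=2$ once ``magic'' is assumed. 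This buys you a shorter proof of the corollary, while the paper's route buys the extra information of exactly \emph{which} cubics are magic.

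Your second paragraph correctly identifies why this ceases to work for higher degree: the bound $\#J\le k$ is no longer close enough to $2\mu$ to force equality, so one genuinely needs a mechanism ruling out three or more maximal $[j]_p$-orbits. That is indeed the open content of the conjecture.
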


Although we have not been able to identify all magic entire functions, the following gives a complete result for quadratic and cubic polynomials.
\begin{theorem}
\label{th:3} Suppose that $p$ is a polynomial of the form \eqref{eq:p}. If $p$ is a quadratic, then $p$ is not magic. If $p$ is a cubic, then $p$ is magic if and only if $$p(z) = 1 + az^2 + bz^3, \text{ where } a, b \ne 0 \text{ and } \operatorname{Re}(ba^{-3/2}) = 0.$$
\end{theorem}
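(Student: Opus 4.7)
The plan is to dispose of the non-magic cases quickly by the contrapositive of Theorem \ref{th:2new}, and then to show that the surviving cubics are in fact magic via a symmetry argument.

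First, I treat the non-magic cases. Any quadratic of the form \eqref{eq:p} has $k \in \{1,2\}$. If $k=1$ then the range $\{1,\ldots,2k-3\}$ in Definition \ref{def:exceptional} is empty, so $p$ cannot be exceptional; if $k=2$ then $p(z)=1+az^2$ is a two-term polynomial and, as observed following Definition \ref{def:exceptional}, is also not exceptional. Theorem \ref{th:2new} then gives $\#J_p = \mu_p$, so no quadratic is magic. For cubics, the same reasoning rules out $k=1$ and $k=3$, leaving only $k=2$ with $p(z) = 1 + az^2 + bz^3$, $a,b \ne 0$. Here $\mu_p = \gcd(2,3) = 1$, the core polynomial index is $N = 3$, and the only admissible parameters in \eqref{eq:notmagic} are $m=1$ and $\sigma = 3$. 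The condition becomes
\[
\pi = \frac{2}{3}(m'\pi - \arg b) + \arg a \quad \text{for some } m' \in \Z,
\]
which rearranges to $2\arg b - 3\arg a \equiv \pi \pmod{2\pi}$, and this in turn is equivalent to $\operatorname{Re}(ba^{-3/2}) = 0$ (independently of the branch of $a^{-3/2}$, since the two branches differ only by a sign). Thus, if $\operatorname{Re}(ba^{-3/2}) \ne 0$ then $p$ is not exceptional and Theorem \ref{th:2new} yields $\#J_p = 1 = \mu_p$, so $p$ is not magic. This gives the ``only if'' direction.

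For the ``if'' direction, suppose $p(z) = 1 + az^2 + bz^3$ with $b \ne 0$ and $\operatorname{Re}(ba^{-3/2}) = 0$. A rotation $z \mapsto \lambda z$ sends $p$ to a polynomial of the same form with coefficients $a\lambda^2$, $b\lambda^3$, preserves the magic property, and leaves $ba^{-3/2}$ unchanged. Choosing $\arg \lambda = -\tfrac{1}{2}\arg a$, I may assume $a > 0$, in which case the hypothesis reduces to $\operatorname{Re}(b) = 0$, so $b = i\beta$ with $\beta \in \R \setminus \{0\}$. With $a,\beta \in \R$, a direct computation gives $p(-\bar z) = \overline{p(z)}$. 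Hence $|p|$ is invariant under the orientation-reversing isometry $z \mapsto -\bar z$, which fixes $0$ and acts as the reflection $\theta \mapsto \pi - \theta$ on each circle $|z|=r$; in particular $\M(p)$ is invariant under this map.

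Now apply Theorem \ref{th:1}: near the origin $\M(p) \cap \{0<|z|\le R\} = \bigcup_{j \in J_p} \gamma_j$ with $J_p \subseteq \{0,1\}$, where each $\gamma_j = \M(p\vert_{S_j(R,\phi)})$ is tangent at $0$ to $\arg z = \omega_j$ with $\omega_0 = 0$, $\omega_1 = \pi$. The reflection $z \mapsto -\bar z$ sends $S_0(R,\phi)$ onto $S_1(R,\phi)$, so by part \ref{item:localmax} of Theorem \ref{th:1} it carries $\gamma_0$ onto $\gamma_1$. Consequently $|p|$ takes the same values along $\gamma_0$ and $\gamma_1$, so $0 \in J_p$ if and only if $1 \in J_p$. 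Since $J_p$ is nonempty ($p$ is not a monomial), $J_p = \{0,1\}$ and $\#J_p = 2 > 1 = \mu_p$, proving that $p$ is magic.

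The main obstacle is the ``if'' direction: a direct attack would require locating the critical points of $|p(re^{i\theta})|^2$ on each small circle and verifying that both local maxima near $\omega_0$ and $\omega_1$ are in fact global. The reflection $z \mapsto -\bar z$ avoids this comparison entirely, at the cost of first normalising the coefficients and then carefully verifying that it permutes the Blumenthal--Hayman curves $\gamma_0,\gamma_1$ rather than merely the critical points of $|p|$ on circles.
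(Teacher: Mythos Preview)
Your proof is correct, and the ``if'' direction matches the paper's argument almost exactly: both normalise so that $a>0$ and then exploit the symmetry $p(-\bar z)=\overline{p(z)}$ to conclude that the two local-maximum curves $\gamma_0,\gamma_1$ carry the same values of $|p|$.

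Where you genuinely diverge from the paper is in the ``only if'' direction. The paper never invokes Theorem~\ref{th:2new}; instead it performs an explicit trigonometric computation
\[
|q(z)|^2-|q(-\bar z)|^2 = 4r^3|b'|\cos\phi\,(\cos 3\theta + r^2\cos\theta),
\]
and reads off from the sign of $\cos\phi=\operatorname{Re}(b')/|b'|$ which of the two curves dominates near the origin. Your route---checking that non-exceptionality is equivalent to $\operatorname{Re}(ba^{-3/2})\ne 0$ and then applying Theorem~\ref{th:2new}---is cleaner and avoids any computation, but it is logically heavier, since Theorem~\ref{th:2new} rests on the whole machinery of Sections~\ref{S:auxilliary}--\ref{S:th2}. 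The paper's approach keeps Theorem~\ref{th:3} essentially self-contained (using only Theorem~\ref{th:1}) and, as a by-product, identifies \emph{which} ray the single surviving curve is tangent to when $\cos\phi\ne 0$; your argument does not recover this information. Conversely, your reduction makes transparent the remark following Theorem~\ref{th:3} that for cubics ``exceptional'' and ``magic'' coincide, which in the paper's treatment is left as a separate check.
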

\begin{remark}\normalfont
It is straightforward to check that Theorem~\ref{th:3} implies that, for cubic polynomials, $p$ is exceptional exactly when $p$ is magic. It is tempting to conjecture that the same holds more generally.
\end{remark}

The following is an immediate consequence of the proof of Theorem~\ref{th:3}.
\begin{corollary}
Conjecture~\ref{con1} holds for polynomials of degree less than four.
\end{corollary}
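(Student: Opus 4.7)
The plan is to deduce the corollary directly from Theorem~\ref{th:3} together with Hayman's upper bound, treating each possible degree separately. Since the content is essentially bookkeeping, I expect no real obstacle; the only point to be careful about is identifying the parameters $k$ and $\mu_p$ correctly in each case, and then checking that the two inequalities $\mu_p < \#J_p \leq k$ force $\#J_p = 2\mu_p$.

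\textbf{Degree 1.} If $p(z) = 1 + az$ with $a\neq 0$, then $k = 1$, and Hayman's theorem quoted above gives $\#J_p \leq k = 1$. Combined with the inequality $\#J_p \geq \mu_p \geq 1$ from Theorem~\ref{th:1}\ref{th:number}, this yields $\#J_p = \mu_p = 1$, so $p$ is not magic and the conjecture is vacuously satisfied.

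\textbf{Degree 2.} Theorem~\ref{th:3} states that no quadratic of the form \eqref{eq:p} is magic, so again the conjecture holds vacuously.

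\textbf{Degree 3.} By Theorem~\ref{th:3}, every magic cubic is of the form $p(z) = 1 + az^2 + bz^3$ with $a,b\neq 0$ and $\operatorname{Re}(ba^{-3/2}) = 0$. For such $p$ the representation \eqref{eq:p} has $k=2$, and since the nonzero Taylor exponents of $p$ are exactly $2$ and $3$, we get $\mu_p = \gcd(2,3) = 1$. Hayman's theorem gives $\#J_p \leq k = 2$, while magicness gives $\#J_p > \mu_p = 1$. Together these force $\#J_p = 2 = 2\mu_p$, verifying Conjecture~\ref{con1} in this case. Combining the three cases yields the corollary.
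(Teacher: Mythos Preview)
Your argument is correct. The paper derives the corollary as an immediate consequence of the \emph{proof} of Theorem~\ref{th:3}: in the magic cubic case that proof explicitly establishes that $\M(q)$ has exactly two components near the origin, giving $\#J_p = 2 = 2\mu_p$ directly. Your route is slightly different and in one respect cleaner: you use only the \emph{statement} of Theorem~\ref{th:3} to identify the magic cubics, observe that then $k=2$ and $\mu_p=\gcd(2,3)=1$, and squeeze $\#J_p$ between $\mu_p+1$ and Hayman's bound $k$. This avoids having to inspect the internals of the proof of Theorem~\ref{th:3}, at the cost of invoking Hayman's upper bound (equivalently the bound $\#J_p\le k$ implicit in Theorem~\ref{th:1}). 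Either way the content is the same bookkeeping.
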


\begin{remark} For ease of exposition, we have stated our results for entire functions. However, our arguments only require the existence of a Taylor series locally. Thus, with a suitable definition of the maximum modulus set, our results can be applied to any function analytic in a neighbourhood of the origin.
\end{remark}
We observe finally that, if $p$ is a polynomial, then our results can also be used to study the structure of $\M(p)$ near infinity. This is for the following reason. Suppose that the degree of $p$ is $n$, and let $q$ be the \emph{reciprocal polynomial}, defined by
\[
q(z) \defeq z^n p(1/z).
\]
As observed in \cite[Proposition 3.3]{letidavenew}, we have that $z \in \M(q) \setminus \{0\}$ if and only if $1/z \in \M(p) \setminus \{0\}$. Hence the structure of $\M(p)$ near infinity is completely determined by the structure of $\M(q)$ near the origin.

\subsection*{Acknowledgments} We would like to thank Peter Strulo for programming assistance leading to Figure \ref{fig1}, and Argyrios Christodoulou for helpful feedback. We are very grateful to the referee, whose thoughtful and meticulous report has greatly improved this paper.
%
%
\section{Proof of Theorem~\ref{th:1}}
\label{S.th1}
The goal of this section is to prove Theorem~\ref{th:1}. We use the following, which is easy to check.
\begin{lemma}
\label{lemm:modulus}
If $f(z) \defeq \sum^\infty_{\ell=0} a_\ell z^\ell$ is an entire function, then 
\begin{equation}
\label{eq:modp}
\left| f(r e^{i\theta})\right|^2= \sum_{\ell=0}^\infty \vert a_\ell\vert^2r^{2\ell} +  \sum_{0 \leq j < \ell} 2 \vert a_j\vert \vert a_\ell\vert r^{j+\ell} \cos((j-\ell)\theta+\arg(a_j)-\arg(a_\ell)).
\end{equation}
\end{lemma}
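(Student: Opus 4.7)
The plan is a direct calculation, using the identity $|w|^2 = w\bar w$ for complex $w$. First I would write
$$|f(re^{i\theta})|^2 = f(re^{i\theta})\,\overline{f(re^{i\theta})} = \left(\sum_{j=0}^\infty a_j r^j e^{ij\theta}\right)\left(\sum_{\ell=0}^\infty \overline{a_\ell}\, r^\ell e^{-i\ell\theta}\right),$$
and, expressing each nonzero coefficient in polar form as $a_\ell = |a_\ell| e^{i\arg a_\ell}$, this product expands to
$$\sum_{j,\ell \geq 0} |a_j|\,|a_\ell| \, r^{j+\ell} \, e^{i\bigl((j-\ell)\theta + \arg a_j - \arg a_\ell\bigr)}.$$
Because $f$ is entire, $\sum_\ell a_\ell z^\ell$ converges absolutely at $z = re^{i\theta}$, so the Cauchy product above, and hence the corresponding double series, converges absolutely. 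This justifies any rearrangement of its terms.

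Next I would split the double sum into the diagonal contribution ($j=\ell$), which yields $\sum_{\ell \geq 0} |a_\ell|^2 r^{2\ell}$, and the off-diagonal contribution. For the latter, I would pair the term of index $(j,\ell)$, with $j < \ell$, with the term of index $(\ell,j)$. Since the two exponents are complex conjugates of each other, Euler's formula $e^{ix} + e^{-ix} = 2\cos x$ gives
$$|a_j||a_\ell| r^{j+\ell}\Bigl(e^{i((j-\ell)\theta + \arg a_j - \arg a_\ell)} + e^{-i((j-\ell)\theta + \arg a_j - \arg a_\ell)}\Bigr) = 2|a_j||a_\ell| r^{j+\ell}\cos\bigl((j-\ell)\theta + \arg a_j - \arg a_\ell\bigr),$$
which is exactly the summand appearing in \eqref{eq:modp}. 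Summing over $0 \leq j < \ell$ and combining with the diagonal contribution yields the claimed formula.

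There is no real obstacle here: the lemma is simply the statement $|f|^2 = f\bar f$ written out as a power series. The only point worth noting is the appeal to absolute convergence, which holds throughout $\C$ because $f$ is entire, and allows the double sum to be reordered and the off-diagonal terms grouped into conjugate pairs.
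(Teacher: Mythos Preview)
Your argument is correct and is exactly the direct computation the paper has in mind; the paper itself omits the proof, simply remarking that the lemma ``is easy to check.'' There is nothing to add.
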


For the rest of the section, let us fix an entire function $f$ as in \eqref{eq:p}, that is,
\begin{equation*}
f(z) \defeq 1 + a z^k + \text{higher order terms}, \qfor a \ne 0, \text{ and } k \in \N.
\end{equation*}
Suppose that $z=r e^{i\theta}$. Then, using Lemma~\ref{lemm:modulus},
\begin{equation}
\label{maineq}
\left| f(r e^{i\theta})\right|^2 = \Lambda(r) +  2|a|r^{k} \cos(k\theta + \arg a) + O(r^{k+1}), \quad \text{ as }\quad r \rightarrow 0,
\end{equation}
where
\[
\Lambda(r) \defeq 1 + |a|^2 r^{2k} + \ldots,
\]
is independent of $\theta$. 

\begin{observation}\label{obs_maineq}
It follows by inspection of \eqref{eq:modp} that all partial derivatives with respect to $\theta$ of the $O(\cdot)$ term in \eqref{maineq} are also $O(r^{k+1})$. 
\end{observation}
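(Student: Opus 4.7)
The plan is to read the $O(r^{k+1})$ remainder off from \eqref{eq:modp} and then bound each of its $\theta$-derivatives termwise.

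First I would note that, because $f$ has the form \eqref{eq:p}, its Taylor coefficients satisfy $a_0=1$, $a_k=a$, and $a_\ell=0$ for $1\le \ell<k$. Consequently, in the $\theta$-dependent double sum in \eqref{eq:modp} the only summand contributing a term of $r$-degree $k$ is $(j,\ell)=(0,k)$, giving precisely $2|a|r^{k}\cos(k\theta+\arg a)$; every other nonvanishing summand must satisfy $j+\ell\ge k+1$ (either $j=0$ with $\ell>k$, or $j\ge k$ and $\ell>j$). Thus the $O(r^{k+1})$ remainder in \eqref{maineq} is given explicitly by
\[
T(r,\theta)=\sum 2|a_j||a_\ell|\,r^{j+\ell}\cos\bigl((j-\ell)\theta+\arg a_j-\arg a_\ell\bigr),
\]
where the sum runs over those pairs $0\le j<\ell$ with $a_ja_\ell\ne 0$ and $(j,\ell)\ne(0,k)$, each of which satisfies $j+\ell\ge k+1$.

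Next, differentiating $T$ termwise $n$ times with respect to $\theta$ turns each cosine into $\pm(j-\ell)^n$ times a sinusoid bounded by $1$, so that
\[
|\partial_\theta^{n}T(r,\theta)|\le 2\sum_{j+\ell\ge k+1}|j-\ell|^{n}|a_j||a_\ell|\,r^{j+\ell}.
\]
Using $|j-\ell|^{n}\le 2^{n}(j^{n}+\ell^{n})$ decouples the right-hand side into a sum of two Cauchy products whose factors $\sum_{\ell}\ell^{n}|a_\ell|r^{\ell}$ and $\sum_{\ell}|a_\ell|r^{\ell}$ both have infinite radius of convergence, because $f$ is entire (so $|a_\ell|^{1/\ell}\to 0$, and therefore also $(\ell^{n}|a_\ell|)^{1/\ell}\to 0$). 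Factoring $r^{k+1}$ out of the restricted sum shows that, for $r\in[0,1]$, the residual series is uniformly bounded by a constant $C_n$ depending only on $n$ and on the coefficients of $f$, giving $\partial_\theta^{n}T(r,\theta)=O(r^{k+1})$ as $r\to 0$.

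There is no real obstacle: the only subtlety is justifying termwise differentiation, which is immediate from the Cauchy-product bound above, since that bound shows the differentiated double series converges absolutely and uniformly on compact subsets of $[0,\infty)$ in $r$. This is presumably what the authors mean by ``by inspection''.
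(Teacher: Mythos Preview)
Your argument is correct and is precisely the computation the authors intend when they say ``by inspection'': the paper offers no further proof of this observation, and your termwise identification of the remainder from \eqref{eq:modp} together with the bound on its $\theta$-derivatives is the natural justification. The one detail worth noting is that your decoupling bound $|j-\ell|^n\le 2^n(j^n+\ell^n)$ and the resulting Cauchy-product estimate are more than is strictly needed---since each term already carries the factor $r^{j+\ell}$ with $j+\ell\ge k+1$, convergence of $\sum_\ell \ell^n|a_\ell|r^\ell$ for all $r$ (immediate from entirety) suffices---but this does no harm.
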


Recall from the introduction that for each $j\in \Sigma$, we defined the angle $\omega_j$ in \eqref{eq:angledef} and, for $\phi, r >0$, the sector $S_j(r, \phi)$ in \eqref{eq:sectordef}.

\begin{proof}[Proof of Theorem \ref{th:1}]
Observe that $\M(f)$ is contained in the set of points where $\left| f(r e^{i\theta})\right|$ is locally maximised, that is,
$$\M(f)\subseteq \left\{r e^{i\theta}\in \C \colon \frac{\partial}{\partial \theta} \left| f(r e^{i\theta})\right|=0 \text{ and }\frac{\partial^2}{\partial \theta^2}\left| f(r e^{i\theta})\right| \leq 0 \right\}. $$
Using \eqref{maineq}, and by Observation \ref{obs_maineq}, we have that
\begin{equation}
\label{eq:partial}
\frac{\partial}{\partial \theta} \left| f(r e^{i\theta})\right|^2 = -2|a| k r^{k} \sin(k\theta + \arg a) + O(r^{k+1}), \text{ as } r \rightarrow 0,
\end{equation}
and
\begin{equation}\label{eq_secondpartial}
\frac{\partial^2}{\partial \theta^2} \left| f(r e^{i\theta})\right|^2 = -2|a|k^2 r^{k} \cos(k\theta + \arg a) + O(r^{k+1}), \text{ as } r \rightarrow 0.
\end{equation}

Fix $r_1,  \phi >0$ sufficiently small, with the property that for all $0<r\leq r_1$ and $r e^{i\theta}\in  \bigcup^{k-1}_{j=0} S_j(r_1,\phi)$, the second derivative in \eqref{eq_secondpartial} is not positive. Reducing $r_1$ and $\phi$ if necessary, we can deduce that for each $0< r\leq r_1$ and $j\in \Sigma$, there is exactly one point $re^{i\theta} \in S_j(r_1,\phi)$ at which the derivative in \eqref{eq:partial} is zero; the fact there is at least one such point follows from \eqref{eq:partial}, and the fact there is at most one follows from \eqref{eq_secondpartial}. Moreover, $\cos(k\theta + \arg a)$ takes the value 1 inside each sector, and is bounded above by a quantity less than 1 which depends only on $\phi$ outside the union of sectors. Now it follows from \eqref{maineq} that
\begin{equation}\label{eq_rev}
\M(f) \cap \{ z : 0<|z| \leq r_1 \}\subseteq \bigcup^{k-1}_{j=0} S_j(r_1,\phi).
\end{equation}

Next, for each $j\in \Sigma$ and $0 < r \leq r_1$, let 
$$\gamma^r_j\defeq \M(f\vert_{S_{j}(r,\phi)}).$$

Note that $\gamma^{r_1}_j$ is the solution set in $S_j(r_1, \phi)$ to \eqref{eq:partial} being zero. Using a change of variables or the implicit function theorem, see \cite[Lemma~4]{Hayman} or \cite[II.3]{valironlectures}, one can see that $\gamma^{r_1}_j$ is an analytic curve. It is easy to see that $\gamma^{r_1}_j$ contains exactly one point of each modulus.

Thus, we have shown that there exists $r_1>0$ and a collection $\{\gamma^{r_1}_j\}_{j\in \Sigma}$ of disjoint analytic curves such that $\gamma^{r_1}_j=\M(f\vert_{S_{j}(r_1,\phi)})$. By this and \eqref{eq_rev}, we have
$$\M(f) \cap \{ z : 0<|z| \leq r_1 \} \subseteq \bigcup_{j\in \Sigma}\gamma^{r_1}_j.$$
By results of Blumenthal \cite{Blumenthal}, see \cite[Section 3]{letidavenew}, it follows that there exists $R<r_1$ such that $\M(f)\cap \{ z : 0<|z| \leq R \}= \bigcup_{j\in J}\gamma^{R}_j$ for some subset $J\subseteq \Sigma$.  We deduce \ref{item:localmax} and \ref{th:curves}. 

Next we prove \ref{th:tangent}. First, note that by \eqref{maineq}, for each $j\in J$, the curve $\gamma_j\defeq \gamma^{R}_j$ is asymptotic to the set of points where the term $\cos(k\theta + \arg a)$ is maximised. It follows that $\gamma_j$ is tangent at the origin to the ray $L_j$ of argument $\omega_j$. It remains to estimate at what rate points of $\gamma_j$ tend to $L_j$ as we move towards the origin.

For each $0 < r \leq R$, denote the argument of the point of $\gamma_j$ of modulus $r$ by $\omega_j + \theta_r$. Fix $j$, and let $z = re^{i (\omega_j + \theta_r)} \in \gamma_j$. Then, by \eqref{maineq}, as $r \rightarrow 0$, we have
\begin{align*}
|f(re^{i\omega_j})|^2 \!-\! |f(z)|^2 &= 2|a|r^k \cos(\arg a \!+\!k\omega_j)\!-\!2|a| r^k \cos(\arg a \!+\! k\omega_j \!+\! k\theta_r)\! + \! O(r^{k+1}) \\
                                 &= 2|a|r^k(1 - \cos(k\theta_r) + O(r)).
\end{align*}
Since $|f(re^{i\omega_j})|^2 - |f(z)|^2$ is not positive, neither is $(1 - \cos(k\theta_r) + O(r))$. Since $1-\cos(k\theta_r) \geq (k\theta_r)^2/3$ when $\theta_r$ is small, it follows that $\theta_r = O(r^{1/2})$ as $r \rightarrow 0$. We deduce \ref{th:tangent}.

From the definition of $\mu$, it follows that $f(z) = f(z e^{2\pi i n/\mu})$, for every $z$ and every integer $n$. With $j, j'$, and $m$ as in the statement of \ref{th:number}, it follows that $z$ is in the sector $S_j(R, \phi)$ if and only if $ze^{2\pi i m/\mu}$ is in $S_{j'}(R, \phi)$. Combining these two facts, we obtain the desired relationship between $\gamma_j$ and $\gamma_{j'}$ and also conclude that $j \in J$ if and only if $j'  \in J$. Finally, note that considering the relation $j' \equiv j \mod k/\mu$ we can divide $\Sigma$ into $k/\mu$ equivalence classes of $\mu$ elements each, and we have shown that $J$ consists of a union of some of these equivalence classes. This proves \ref{th:number}, which completes the proof of the theorem.
\end{proof}
%
%
\section{Auxiliary results}
\label{S:auxilliary}
To prove Theorem~\ref{th:2new}, we need to prove in Section \ref{S:th2} a key result on the maximum modulus set of certain polynomials. In this section we give some auxiliary results on the maximum modulus of any polynomial $p$ of the form \eqref{eq:p}, which we state separately since they do not require any further assumptions on $p$. In particular, these results may be useful for future applications. Then, we state and prove the key result, namely Theorem~\ref{th:2}, in Section \ref{S:th2}.

Throughout this section and Section \ref{S:th2}, we fix a polynomial $p$ of the form \eqref{eq:p}. Note that if $p(z) = 1 + az^k$, then Theorem~\ref{th:2new} follows trivially. Hence we can assume that $p$ has at least three terms. Let $\hp$ be the polynomial of degree less than $p$ such that
\begin{equation}
\label{eq:hpdef}
p(z) = \hp(z) + b z^n, 
\end{equation}
for some $b \ne 0$ and $n \in \N$ the degree of $p$. Note that, in particular, $\hp$ is a polynomial of the form \eqref{eq:p}, whose non-constant term of least degree is the same as that of $p$, that is, $az^k$. The polynomials $p$ and $\hp$ will remain fixed from now on.

We next introduce some notation that will be used extensively in both this section and the next. By Theorem~\ref{th:1}\ref{th:number} we know that $J_p$ consists of one or more disjoint sets, each of which contains all the elements of $\Sigma$ that are congruent modulo $k/\id$. If $j \in J_p$, then we use $[ j ]_p$ to denote this set, that is,
\[
[ j ]_p \defeq \{ j' \in \Sigma : j' \equiv j \mod k/\mu_p \}.
\]
In addition, let $R>0$, and let $\{\gamma_j\}_{j\in \Sigma}$ be the collection of curves provided by Theorem \ref{th:1}, so that \eqref{eq:Mindisc} holds. Then, for $0 < r \leq R$ and $j \in \Sigma$, we let $z_j(r)$ denote the unique point on $\gamma_j$ of modulus $r$. Moreover, reducing $R$ if necessary, if $\{\hat{\gamma}_j\}_{j\in \Sigma}$ is the corresponding set of curves provided by Theorem \ref{th:1} applied to $\hp$, then we let $\hz_j(r)$ denote the unique point on $\hat{\gamma}_j$ of modulus $r$. This completes the definition of the notation.

We next give an observation which allows us to estimate the square of the modulus of $p$ in terms of that of $\hp$ at each point in the plane. This is an immediate consequence of Lemma \ref{lemm:modulus}.
\begin{observation}
\label{obs:approx}
We have, as $r \rightarrow 0$, that
\begin{equation}\label{eq:difference}
|p(re^{i\theta})|^2 - |\hp(re^{i\theta})|^2= 2|b|r^n\cos(n\theta+ \arg b)+ O(r^{n+k}),
\end{equation}
 and all partial derivatives of the $O(\cdot)$ term with respect to $\theta$ are also $O(r^{n+k})$. 
\end{observation}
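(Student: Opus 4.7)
The plan is to apply Lemma~\ref{lemm:modulus} to $p$ and to $\hp$ separately and subtract. Writing $\hp(z) = \sum_{\ell=0}^{n-1} a_\ell z^\ell$ with $a_0=1$, $a_k=a$, and $a_1=\cdots=a_{k-1}=0$ (since $\hp$ has form~\eqref{eq:p} with the same $a$ and $k$ as $p$), the coefficients of $p$ coincide with those of $\hp$ for indices $0,\dots,n-1$, and $p$ has the additional coefficient $b$ at index $n$. Hence, when forming $|p(re^{i\theta})|^2-|\hp(re^{i\theta})|^2$ using the formula~\eqref{eq:modp} from Lemma~\ref{lemm:modulus}, every term that does not involve the new index $\ell=n$ cancels.

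What remains is the diagonal contribution $|b|^2 r^{2n}$ together with the cross-terms
\[
\sum_{j=0}^{n-1} 2|a_j||b|\, r^{j+n} \cos\bigl((j-n)\theta + \arg a_j - \arg b\bigr).
\]
Since $n$ is the degree of $p$ and $p$ has at least three terms, $n\geq k+1$, so $r^{2n}=O(r^{n+k})$ and this diagonal contribution can be absorbed in the error. Among the cross-terms, the $j=0$ term uses $a_0=1$ and, after noting that cosine is even, yields precisely the displayed main term $2|b|r^n\cos(n\theta+\arg b)$. Every remaining nonzero cross-term has $j\geq k$ (because $a_1=\cdots=a_{k-1}=0$), so it contributes $O(r^{n+k})$ as $r\to 0$. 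This establishes \eqref{eq:difference}.

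For the statement about partial derivatives, I would simply observe that $\theta$ enters the remainder only through cosines of the form $\cos((j-\ell)\theta+\text{const})$. Differentiating in $\theta$ any number of times replaces these cosines by $\pm\sin$ or $\pm\cos$ multiplied by a bounded power of $(j-\ell)$, leaving every power of $r$ unchanged. Thus each $\theta$-derivative of the $O(r^{n+k})$ remainder is again $O(r^{n+k})$.

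No step of this argument is subtle; the only point that needs care is checking that $2n\geq n+k$ so that the diagonal contribution $|b|^2 r^{2n}$ does indeed fit into the error term, which uses the standing assumption that $p$ has at least three terms and hence $n>k$.
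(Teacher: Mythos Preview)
Your proof is correct and is exactly the computation the paper has in mind: the paper does not give a proof but simply states that the observation is an immediate consequence of Lemma~\ref{lemm:modulus}, and you have filled in the details of that consequence accurately, including the check that $2n\geq n+k$ and the reason the $\theta$-derivatives behave the same way.
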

The first lemma in this section is key to the proof of Theorem \ref{th:2}. Roughly speaking, it says that, close to the origin, $|\hp|^2$ at a point $\hz_j(r)$ (which is where $\hp$ takes its maximum modulus) is very close to $|\hp|^2$ at a point $z_j(r)$ (which is where $p$ takes its maximum modulus). 
\begin{lemma}
\label{lemm:phatisnice}
Suppose $j \in \Sigma$. Then
\[
|\hp(\hz_{j}(r))|^2 - |\hp(z_j(r))|^2 = O(r^{n+1/2}), \text{ as } r \rightarrow 0.
\]
\end{lemma}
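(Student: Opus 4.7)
The plan is to express the desired difference using the mean value theorem applied to $\theta \mapsto |\hp(re^{i\theta})|^2$ between the arguments of $z_j(r)$ and $\hz_j(r)$, then estimate the two resulting factors separately.

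First I would set up notation. Writing $z_j(r) = re^{i(\omega_j + \theta_r)}$ and $\hz_j(r) = re^{i(\omega_j + \hat\theta_r)}$, Theorem~\ref{th:1}\ref{th:tangent} applied to both $p$ and $\hp$ yields $\theta_r, \hat\theta_r = O(r^{1/2})$ as $r \to 0$. The mean value theorem then gives a $\tilde\theta$ lying between $\hat\theta_r$ and $\theta_r$ with
\[
|\hp(\hz_j(r))|^2 - |\hp(z_j(r))|^2 = \Bigl.\frac{\partial}{\partial \theta}|\hp(re^{i\theta})|^2\Bigr|_{\theta = \omega_j + \tilde\theta} \cdot (\hat\theta_r - \theta_r).
\]

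Next I would bound the derivative factor using the analogue of \eqref{eq:partial} for $\hp$, which has the same form since $\hp$ also starts with $1 + az^k$:
\[
\frac{\partial}{\partial \theta}|\hp(re^{i\theta})|^2 = -2|a|k r^k \sin(k(\theta - \omega_j)) + O(r^{k+1}),
\]
where I used $k\omega_j + \arg a \equiv 0 \pmod{2\pi}$. Since $\tilde\theta$ lies between two quantities that are $O(r^{1/2})$, we have $\sin(k\tilde\theta) = O(r^{1/2})$, so this derivative is $O(r^{k+1/2})$.

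The key step is to prove $\hat\theta_r - \theta_r = O(r^{n-k})$. By definition, $\partial_\theta|\hp(re^{i\theta})|^2$ vanishes at $\theta = \omega_j + \hat\theta_r$, while $\partial_\theta|p(re^{i\theta})|^2$ vanishes at $\theta = \omega_j + \theta_r$. Using Observation~\ref{obs:approx}, the $\theta$-derivative of $|p|^2 - |\hp|^2$ is $-2|b|n r^n \sin(n\theta + \arg b) + O(r^{n+k}) = O(r^n)$, hence
\[
\Bigl.\frac{\partial}{\partial \theta}|\hp(re^{i\theta})|^2\Bigr|_{\theta = \omega_j + \theta_r} = O(r^n).
\]
Applying the mean value theorem once more to the function $h(\theta) \defeq \partial_\theta|\hp(re^{i\theta})|^2$, and using $h'(\theta) = -2|a|k^2 r^k \cos(k(\theta - \omega_j)) + O(r^{k+1})$, the intermediate value of $h'$ is bounded away from zero in absolute value by $c r^k$ for small $r$ (since both $\theta_r$ and $\hat\theta_r$ are close to zero, so $\cos$ is close to $1$). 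Dividing $O(r^n)$ by $c r^k$ yields $|\hat\theta_r - \theta_r| = O(r^{n-k})$.

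Finally, multiplying the two estimates gives $O(r^{k+1/2}) \cdot O(r^{n-k}) = O(r^{n+1/2})$, as required. The main obstacle is the bound on $\hat\theta_r - \theta_r$ in the third step: this requires two distinct inputs, namely the $O(r^n)$ perturbation coming from the $bz^n$ term (via Observation~\ref{obs:approx}) and the non-degeneracy of the critical point of $|\hp|^2$ along circles near the origin (the second derivative being of order $r^k$). The rest of the argument is direct application of Taylor-type estimates from the preceding results.
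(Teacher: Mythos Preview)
Your proof is correct and follows essentially the same approach as the paper: both arguments hinge on the key bound $|\hat\theta_r - \theta_r| = O(r^{n-k})$, obtained by combining the $O(r^n)$ size of $\partial_\theta\bigl(|p|^2-|\hp|^2\bigr)$ from Observation~\ref{obs:approx} with the fact that $\partial_\theta^2|\hp|^2$ is of exact order $r^k$ near $\omega_j$. The only cosmetic difference is that you use the mean value theorem twice (once for $|\hp|^2$ and once for its $\theta$-derivative), whereas the paper writes out first-order Taylor expansions with remainder and passes through the decomposition $|p|^2 = |\hp|^2 + 2|b|r^n\cos(n\theta+\arg b) + O(r^{n+k})$; your route is slightly more direct, since you avoid introducing the auxiliary function $h$ and get the factor $O(r^{k+1/2})$ for the derivative of $|\hp|^2$ immediately from $\tilde\theta = O(r^{1/2})$.
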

\begin{proof}
To prove this result, it helps to simplify notation. Suppose $j \in \Sigma$ is fixed. Then, define real analytic functions $f,g,h\colon \R\times \R\to \R$ as
\[
f(r,\theta) \defeq |p(r e^{i(\omega_j + \theta)})|^2, \qquad g(r,\theta) \defeq |\hp(r e^{i(\omega_j + \theta)})|^2,
\]
and finally,
\[
h(r,\theta) \defeq 2|b|r^n \cos(\arg b + n (\omega_j + \theta)).
\]

We use a dash to denote differentiation with respect to $\theta$. We need to estimate the partial derivatives of $f, g$ and $h$. It follows from (\ref{maineq}) that 
\[
g'(r,\theta) = -2|a|kr^k\sin(k\theta + k\omega_j + \arg a)+O(r^{k+1}), 
\]
and 
\begin{equation}
\label{eq:gdoubledash}
g''(r,\theta)= -2|a|k^2r^k\cos(k\theta + k\omega_j + \arg a)+O(r^{k+1}).
\end{equation}
Note that all derivatives of $f$ and $g$ with respect to $\theta$ are $O(r^k)$ as $r \rightarrow 0$, because the first term dominates. We also have that 
\begin{equation}
\label{eq:hdash}
h'(r,\theta)= -2|b|nr^n\sin(n\theta + n\omega_j + \arg b),
\end{equation}
and all derivatives of $h$ with respect to $\theta$ are $O(r^n)$ as $r \to 0$. Finally, it follows from the definitions, along with Observation \ref{obs:approx}, that
\begin{equation}
\label{eq:fgh}
f(r,\theta)= g(r,\theta)+h(r,\theta)+O(r^{n+k}),
\end{equation}
\begin{equation}
\label{eq:derivatives}
f'(r,\theta)= g'(r,\theta)+h'(r,\theta)+O(r^{n+k}),
\end{equation}
and all the higher order derivatives of the $O(\cdot)$ term are also $O(r^{n+k}).$

Recall that for $r$ sufficiently small, $z_j(r)$ and $\hz_j(r)$ are the respective points in the curves indexed by $j$ where $p$ and $\hp$ attain the maximum modulus. Let us write 
$$z_j(r) = r e^{i(\omega_j +\theta_r)} \quad\text{and}\quad \hz_j(r) = r e^{i(\omega_j + \hat{\theta}_r)},$$ 
where the angles $\theta_r$ and $\hat{\theta}_r$ are both functions of $r$. In particular, it follows from the definitions that
\[
f'(r,\theta_r) = 0 \quad\text{ and }\quad g'(r,\hat{\theta}_r) = 0.
\]

With this notation, our goal in this lemma is to estimate, for small values of $r>0$, the quantity $g(r,\hat{\theta}_r) - g(r,\theta_r)$. Recall that, by Theorem~\ref{th:1}\ref{th:tangent}, $\theta_r$ and $\hat{\theta}_r$ are both $O(r^{1/2})$ as $r\to 0$. Since $f$ is real analytic, since $g'(r,\hat{\theta}_r) = 0$, and as $\theta_r - \hat{\theta}_r = O(r^{1/2})$, it follows by \eqref{eq:derivatives} and our bounds on the derivatives that
\begin{align*}
f'(r,\theta_r) &= f'(r,\hat{\theta}_r)+ (\theta_r -\hat{\theta}_r) f''(r,\hat{\theta}_r)+ O((\theta_r - \hat{\theta}_r)^2 \cdot r^{k}) \\
               &= h'(r,\hat{\theta}_r) + (\theta_r -\hat{\theta}_r)(g''(r,\hat{\theta}_r) + h''(r,\hat{\theta}_r)) + O(r^{n+k}) + O((\theta_r - \hat{\theta}_r)^2 \cdot r^{k}).
\end{align*}
Since $f'(r,\theta_r) = 0$, we can deduce that
\[
(\theta_r - \hat{\theta}_r)(g''( r,\hat{\theta}_r) + h''(r,\hat{\theta}_r)) = -h'(r,\hat{\theta}_r)  + O(r^{n+k})+ O((\theta_r - \hat{\theta}_r)^2 \cdot r^{k}).
\]

Moreover, note that by \eqref{eq:gdoubledash} and the bounds on $h''$, we have that
\[
g''( r,\hat{\theta}_r) + h''(r,\hat{\theta}_r) = -2|a|k^2r^k\cos(k\hat{\theta}_r + k\omega_j + \arg a)+O(r^{k+1}),
\]
and $\cos(k\hat{\theta}_r + k\omega_j + \arg a)$ can be taken to be greater than $1/2$.
It then follows by \eqref{eq:hdash} that 
\begin{equation}
\label{eq:thetar}
|\theta_r - \hat{\theta}_r| 
= O(r^{n-k}) +  O((\theta_r - \hat{\theta}_r)^2). 
\end{equation}
Since $\theta_r - \hat{\theta}_r = O(r^{1/2})=o(1)$, and since $O((\theta_r - \hat{\theta}_r)^2)=o(\theta_r - \hat{\theta}_r)$
it follows that
\begin{equation}\label{eq:claim}
|\theta_r - \hat{\theta}_r| = O(r^{n-k}), \text{ as } r \rightarrow 0.
\end{equation}

We can now deduce from \eqref{eq:claim}, together with  {\eqref{eq:fgh}}, \eqref{eq:derivatives}, the real analyticity of $f$ and $h$, and our earlier estimates for the size of $f''$ and $h''$, that
\begin{align*}
g(r,\hat{\theta}_r) - g(r,\theta_r) &= (f(r,\hat{\theta}_r) - f(r,\theta_r))- (h(r,\hat{\theta}_r) - h(r,\theta_r)) + O(r^{n+k}) \\
                                  &= (\hat{\theta_r} - \theta_r)(f'(r,\hat{\theta}_r) -h'(r,\hat{\theta}_r)) + O((\hat{\theta_r} - \theta_r)^2 \cdot r^{k}) + O(r^{n+k}) \\
												          &= (\hat{\theta_r} - \theta_r) g'(r,\hat{\theta}_r) + O(r^{2n}) + O(r^{2n-k}) + O(r^{n+k}) \\
												          &= O(r^{n+1/2}),
\end{align*}
as required. Note that in the last step we have used that $g'(r,\hat{\theta}_r) = 0$ and also that $2n - k> n + 1/2$, since $n \geq k+1$. 
\end{proof}
Now, for each $j \in \Sigma$, we let $t_j \defeq 2|b|\cos(n\omega_j+\arg b)$, where we recall that $\omega_j \defeq (2j\pi-\arg a)/k$. Our next lemma allows us to compare the magnitude of $p$ on different $\gamma_j$. 
\begin{lemma}
\label{lem_newtj} 
Let $j,j'\in J_{\hp}$. Then
\begin{equation}
\label{eq:diff}
|p(z_j(r))|^2 - |p(z_{j'}(r))|^2 = (t_j - t_{j'}) r^n + O\left(r^{n+1/2}\right) \text{ as } r \rightarrow 0.
\end{equation}
\end{lemma}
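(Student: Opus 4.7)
The plan is to compute $|p(z_j(r))|^2$ by writing $p = \hp + bz^n$ and applying the two tools we have just set up: Observation~\ref{obs:approx} to pass from $|p|^2$ to $|\hp|^2$ plus the explicit cosine correction, and Lemma~\ref{lemm:phatisnice} to swap the point $z_j(r)$ (where $p$ maximises) for $\hz_j(r)$ (where $\hp$ maximises) at the cost of an $O(r^{n+1/2})$ error. The crucial observation is that the hypothesis $j, j' \in J_{\hp}$ is precisely what makes $|\hp(\hz_j(r))| = |\hp(\hz_{j'}(r))| = M(r, \hp)$, so that the $|\hp|^2$ contributions cancel out exactly upon subtraction.

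The concrete steps I would carry out are: first, applying Observation~\ref{obs:approx} at the point $z_j(r) = re^{i(\omega_j+\theta_r)}$ yields
\[
|p(z_j(r))|^2 = |\hp(z_j(r))|^2 + 2|b| r^n \cos\bigl(n\omega_j + n\theta_r + \arg b\bigr) + O(r^{n+k}).
\]
Second, Theorem~\ref{th:1}\ref{th:tangent} gives $\theta_r = O(r^{1/2})$, so expanding the cosine around $n\omega_j + \arg b$ replaces the middle term by $t_j r^n + O(r^{n+1/2})$. Third, Lemma~\ref{lemm:phatisnice} lets me replace $|\hp(z_j(r))|^2$ by $|\hp(\hz_j(r))|^2$ with the same order of error. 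Assembling these three steps gives
\[
|p(z_j(r))|^2 = |\hp(\hz_j(r))|^2 + t_j r^n + O(r^{n+1/2}),
\]
and the analogous identity holds for $j'$.

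Finally, since $j, j' \in J_{\hp}$, both $\hz_j(r)$ and $\hz_{j'}(r)$ lie on curves of $\M(\hp)$ and therefore satisfy $|\hp(\hz_j(r))| = |\hp(\hz_{j'}(r))| = M(r,\hp)$. Subtracting the two identities above gives exactly
\[
|p(z_j(r))|^2 - |p(z_{j'}(r))|^2 = (t_j - t_{j'}) r^n + O(r^{n+1/2}),
\]
which is the claim.

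I do not expect a serious obstacle here: the technical work has already been done in Lemma~\ref{lemm:phatisnice}, where the delicate comparison between the maximising angles $\theta_r$ and $\hat{\theta}_r$ was handled. The only point requiring a little care is the cosine expansion: one must confirm that the $O(n\theta_r)$ correction multiplied by $r^n$ remains within the stated error, which is automatic from $\theta_r = O(r^{1/2})$. The conceptual heart of the lemma is simply the cancellation of $|\hp|^2$-terms coming from the fact that both indices lie in $J_{\hp}$; this is what isolates the leading-order behaviour as the elementary quantity $(t_j - t_{j'}) r^n$.
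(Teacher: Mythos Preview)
Your proposal is correct and follows essentially the same approach as the paper: apply Observation~\ref{obs:approx} at $z_j(r)$, expand the cosine using $\theta_r = O(r^{1/2})$ from Theorem~\ref{th:1}\ref{th:tangent}, invoke Lemma~\ref{lemm:phatisnice} to trade $|\hp(z_j(r))|^2$ for $|\hp(\hz_j(r))|^2$, and then use $j,j'\in J_{\hp}$ to cancel the $|\hp|^2$-terms. The only cosmetic difference is that the paper performs the subtraction first and then applies Lemma~\ref{lemm:phatisnice} to the two grouped differences, whereas you apply the lemma before subtracting; the content is identical.
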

\begin{proof}
Let us write 
$$z_j(r) = r e^{i(\omega_j +\theta_r)} \quad\text{and}\quad z_{j'}(r) = r e^{i(\omega_{j'} + \theta'_r)},$$ 
where the angles $\theta_r$ and $\theta'_r$ are functions of $r$. Then, by Theorem~\ref{th:1}\ref{th:tangent}, $\theta_r$ and $\theta'_r$ are both $O(r^{1/2})$ as $r\to 0$. By this, and by Observation \ref{obs:approx}, as $r \rightarrow 0$ we have that
\begin{align*}
|p(z_j(r))|^2 &= |\hp(z_j(r))|^2 + 2|b|r^n\cos(n\omega_j + n\theta_r+ \arg b)+ O(r^{n+k}), \\
              &= |\hp(z_j(r))|^2 + 2|b|r^n(\cos(n\omega_j + \arg b)\cos n\theta_r - \sin(n\omega_j + \arg b)\sin n\theta_r) + O(r^{n+k}), \\
              &= |\hp(z_j(r))|^2 + t_j r^n + O\left(r^{n+1/2}\right),
\end{align*}
where in the last line we have used that $\cos n\theta_r = 1 + O(r)$ and $\sin n\theta_r = O(r^{1/2})$. Also, arguing similarly, we have
\[
|p(z_{j'}(r))|^2=|\hp(z_{j'}(r))|^2 + t_{j'} r^n + O\left(r^{n+1/2}\right), \text{ as } r \rightarrow 0. 
\]

Now, $|\hp(\hz_{j'}(r))| = |\hp(\hz_{j}(r))|$ since  $j,j'\in J_{\hp}$. Hence, by Lemma~\ref{lemm:phatisnice}, we have
\begin{align*}
|p(z_j(r))|^2 - |p(z_{j'}(r))|^2 &= (|\hp(z_j(r))|^2-|\hp(\hz_{j}(r))|^2) - (|\hp(z_{j'}(r))|^2-|\hp(\hz_{j'}(r))|^2) \\
                             &+ (t_j - t_{j'}) r^n + O\left(r^{n+1/2}\right) \\
                             &= (t_j - t_{j'}) r^n + O\left(r^{n+1/2}\right).\qedhere
\end{align*}
\end{proof}

It follows from Lemma~\ref{lem_newtj} that the magnitudes of the quantities $t_j$, for $j \in \Sigma$, are important for determining the size of $|p(z)|$. It proves useful to know exactly when two of these terms can be equal. This is the content of the following lemma.
\begin{lemma}
\label{lemm:equals}
Suppose $j, j' \in \Sigma$. Then $t_j = t_{j'}$ if and only if there is an integer $m$ such that one of the following holds. Either
\begin{equation}
\label{poss1}
j' - j = m\frac{k}{n},
\end{equation}
or
\begin{equation}
\label{poss2}
(j' + j)\pi = \frac{k}{n}(m\pi - \arg b) + \arg a. 
\end{equation}
\end{lemma}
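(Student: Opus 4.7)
The plan is to reduce the equality $t_j = t_{j'}$ to the standard trigonometric identity characterising when two cosines are equal, and then unpack the resulting conditions in terms of $j, j'$ via the explicit formula for $\omega_j$.

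First, I would recall that by definition $t_j = 2|b|\cos(n\omega_j + \arg b)$, so $t_j = t_{j'}$ holds if and only if
\[
\cos(n\omega_j + \arg b) = \cos(n\omega_{j'} + \arg b).
\]
Using the identity $\cos A = \cos B \iff A \equiv B \pmod{2\pi}$ or $A \equiv -B \pmod{2\pi}$, there is an integer $m$ such that one of the following holds:
\[
n\omega_{j'} - n\omega_j = 2m\pi, \qquad \text{or} \qquad n(\omega_j + \omega_{j'}) + 2\arg b = 2m\pi.
\]

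For the first case, substituting $\omega_{j'} - \omega_j = 2\pi(j'-j)/k$ gives $2\pi n(j'-j)/k = 2m\pi$, which rearranges to \eqref{poss1}, namely $j' - j = mk/n$ (after relabelling $m \mapsto -m$ if needed). For the second case, using $\omega_j + \omega_{j'} = 2\pi(j+j')/k - 2\arg a/k$, the equation becomes
\[
\frac{2\pi n(j+j')}{k} - \frac{2n\arg a}{k} + 2\arg b = 2m\pi;
\]
dividing by $2$ and multiplying through by $k/n$ rearranges to \eqref{poss2}. The converse direction is immediate by running these algebraic manipulations backwards.

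There is no serious obstacle here; the argument is a direct calculation, and the only mild care needed is to track the sign conventions on $m$ and to observe that both conditions \eqref{poss1} and \eqref{poss2} are genuinely of the stated form (rather than only modulo some smaller period). Since the lemma is stated as an ``if and only if'' existing for \emph{some} integer $m$, the relabelling $m \mapsto -m$ in Case~1 causes no loss.
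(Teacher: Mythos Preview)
Your proof is correct and follows exactly the approach indicated in the paper, which simply states that the lemma ``is a straightforward consequence of the fact that $\cos z_1 = \cos z_2$ if and only if there is an integer $m$ such that either $z_1 = z_2 + 2m\pi$, or $z_1 = 2m\pi - z_2$'' and omits the details. You have correctly supplied those details; the only minor remark is that the relabelling $m \mapsto -m$ in the first case is in fact unnecessary, since your equation $n\omega_{j'} - n\omega_j = 2m\pi$ already yields $j'-j = mk/n$ directly.
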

\begin{proof}
This is a straightforward consequence of the fact that $\cos z_1 = \cos z_2$ if and only if there is an integer $m$ such that either $z_1 = z_2 + 2m\pi$, or $z_1 = 2m\pi - z_2$. The details are omitted.
\end{proof}
The next lemma gives a simple relationship between the condition \eqref{poss1} and the sets $[j]_p$.
\begin{lemma}
\label{lemm:reduce}
Suppose that $j, j' \in J_{\hp}$ with $j' \in [j]_{\hp}$. Then \eqref{poss1} holds if and only if $j' \in [j]_p$.
\end{lemma}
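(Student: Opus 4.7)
The plan is to reduce both sides of the equivalence to a single divisibility condition on an integer. The key preliminary observation is a formula for $\mu_p$ in terms of $\mu_{\hp}$ and $n$. Indeed, the set of positive indices of nonzero Taylor coefficients of $p$ is exactly the corresponding set for $\hp$ together with $\{n\}$; since $\mu_p$ and $\mu_{\hp}$ are the greatest common divisors of these respective sets, we get
\[
\mu_p = \gcd(\mu_{\hp}, n).
\]
I would state this explicitly at the start of the proof, setting $d \defeq \mu_{\hp}$ so that $\mu_p = \gcd(d,n)$, and noting (as used implicitly throughout Section~\ref{S.th1}) that $d \mid k$.

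Next I would exploit the hypothesis $j' \in [j]_{\hp}$ to write $j' - j = \ell k/d$ for some $\ell \in \Z$. With this in hand, condition \eqref{poss1} becomes
\[
\ell \tfrac{k}{d} = m\tfrac{k}{n} \iff \ell n = m d \iff d \mid \ell n,
\]
and, writing $d = \gcd(d,n)\cdot d'$ and $n = \gcd(d,n)\cdot n'$ with $\gcd(d', n') = 1$, this last condition is equivalent to $d' \mid \ell$. On the other hand, $j' \in [j]_p$ means $j' - j$ is an integer multiple of $k/\mu_p = k/\gcd(d,n)$, i.e.\ $\ell k/d = s k/\gcd(d,n)$ for some $s \in \Z$, which again reduces to $d' \mid \ell$. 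Hence both conditions are equivalent, completing the proof.

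The main thing to get right is the arithmetic identity $\mu_p = \gcd(\mu_{\hp}, n)$; once that is in place, the rest is a short manipulation of divisibility. No real obstacle beyond being careful that $\gcd(d', n') = 1$, which is what allows us to cancel and obtain the same criterion $d' \mid \ell$ from both sides.
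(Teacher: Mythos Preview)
Your proof is correct and is essentially the same argument as the paper's. The paper encodes the identity $\mu_p=\gcd(\mu_{\hp},n)$ implicitly by writing $\hid=A_0\id$, $n=A_2\id$ and arguing that $A_0,A_2$ are coprime, then treats the two implications separately; your version states the gcd identity up front and reduces both sides to the single condition $d'\mid \ell$, which is a slightly cleaner packaging of the same arithmetic.
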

\begin{proof}
First, we note that since $p(z)= 1+ az^k+\dots+bz^n= \hat{p}(z) +bz^n,$ and $\id \leq \hid$, there are natural numbers $A_0, A_1, A_2$ such that $\hid = A_0 \id$, $k = A_1 \hid$ and $n = A_2 \id$. Moreover, $A_0$ and $A_2$ are coprime, since if they shared a factor $A_3 > 1$, then we could replace $\id$ with $A_3 \id$.

Since $j' \in [j]_{\hp}$, it follows by the definition of $[j]_{\hp}$ that there is an integer $B_0$ such that
\[
j' - j = \frac{B_0k}{\hid}.
\]

Suppose first that \eqref{poss1} holds. We can deduce that $m A_0 = B_0 A_2$. Since $m$ is an integer and $A_0$ and $A_2$ are coprime, $m = B_1 A_2$, where $B_1$ is an integer. Hence
\[
j' - j = \frac{B_1 A_2 k}{A_2 \id} = B_1 \frac{k}{\id},
\]
as required.

In the other direction, suppose that $j' \in [j]_p$. Then there is an integer $B_1$ such that 
\[
j' - j = B_1 \frac{k}{\id}.
\]
It follows that
\[
j' - j = B_1 A_2 \frac{k}{n},
\]
and so \eqref{poss1} holds.
\end{proof}
Our last general lemma allows us to compare $J_q$ and $J_{\tq}$ for two related entire functions $q, \tq$ of the form \eqref{eq:p}, where $q$ is a polynomial and $\tq$ may be a polynomial or may be transcendental. Note that $J_q$ and $J_{\tq}$ are the subsets of $\Sigma$ provided by Theorem \ref{th:1}, and these are both well defined sufficiently close to the origin.

\begin{lemma}
\label{lemm:general}
Suppose that $q$ is a polynomial of the form \eqref{eq:p}, of degree $n$. Let $\{\gamma_{\ell}\}_{{\ell}\in \Sigma}$ be the set of curves provided by Theorem \ref{th:1} applied to $q$, and for all sufficiently small values of $r$, let $z_{\ell}(r)$ denote the point on $\gamma_{\ell}$ of modulus $r$. Suppose also that there exist $c, R > 0$ such that
\begin{equation}
\label{eq:condition}
|q(z_{j}(r))|^2 \geq |q(z_{j'}(r))|^2 + c r^{n}, \qfor j \in J_q, j' \in \Sigma \setminus J_q, \text{ and } 0 < r \leq R.
\end{equation}
If $\tq$ is an entire function such that, for some $\tilde{n} > n$ and $b \ne 0$, its power series is
\begin{equation}
\label{eq:gandq}
\tq(z) = q(z) + bz^{\tilde{n}} + \ldots,
\end{equation}
then $J_{\tq} \subset J_q$.
\end{lemma}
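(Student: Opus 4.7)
The plan is to prove the contrapositive: for each $j \in \Sigma \setminus J_q$, I will show that $j \notin J_{\tq}$. First, I apply Theorem~\ref{th:1} to $\tq$; since $\tq$ shares the leading terms $1 + az^k + \cdots$ with $q$, the sectors $S_\ell(R,\phi)$ and angles $\omega_\ell$ are the same for both functions, and we obtain analytic curves $\{\tilde{\gamma}_\ell\}_{\ell \in \Sigma}$ together with a set $J_{\tq} \subset \Sigma$. Let $\tilde{z}_\ell(r)$ denote the unique point on $\tilde{\gamma}_\ell$ of modulus $r$, which maximises $|\tq|$ on the arc of modulus $r$ inside $S_\ell(R,\phi)$. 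Fix $j \in \Sigma\setminus J_q$ and, using \eqref{eq:condition}, pick $j_0 \in J_q$ with $|q(z_{j_0}(r))|^2 \geq |q(z_j(r))|^2 + c r^n$ for all $0 < r \leq R$. The goal is to show that this gap persists after passing from $q$ to $\tq$ and from $z_\ell(r)$ to $\tilde{z}_\ell(r)$, forcing $\tilde{z}_j(r) \notin \M(\tq)$ and hence $j \notin J_{\tq}$.

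Two estimates drive the argument. Expanding $|\tq(z)|^2$ via Lemma~\ref{lemm:modulus} and \eqref{eq:gandq} yields an analog of Observation~\ref{obs:approx}: as $r = |z| \to 0$,
\[
|\tq(z)|^2 - |q(z)|^2 = 2|b| r^{\tilde{n}} \cos(\tilde{n}\theta + \arg b) + O(r^{\tilde{n}+1}),
\]
because the lowest-order new contributions to $|\tq|^2 - |q|^2$ come from the cross terms pairing $bz^{\tilde{n}}$ with the constant term. Second, the proof of Lemma~\ref{lemm:phatisnice} adapts essentially verbatim, with $q$ playing the role of $\hp$ and $\tq$ playing the role of $p$: the argument only uses the Taylor expansion of the modulus squared together with the fact that the maximising angles $\theta_r, \hat{\theta}_r$ differ by $O(r^{\tilde{n}-k})$, and all references to the degree $n$ of the perturbation may be replaced by $\tilde{n}$. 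This gives $|q(z_\ell(r))|^2 - |q(\tilde{z}_\ell(r))|^2 = O(r^{\tilde{n}+1/2})$ for every $\ell \in \Sigma$.

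Combining these estimates at the points $\tilde{z}_j(r)$ and $\tilde{z}_{j_0}(r)$, we get
\[
|\tq(\tilde{z}_\ell(r))|^2 = |q(\tilde{z}_\ell(r))|^2 + O(r^{\tilde{n}}) = |q(z_\ell(r))|^2 + O(r^{\tilde{n}}),
\]
and therefore
\[
|\tq(\tilde{z}_{j_0}(r))|^2 - |\tq(\tilde{z}_j(r))|^2 = \bigl(|q(z_{j_0}(r))|^2 - |q(z_j(r))|^2\bigr) + O(r^{\tilde{n}}) \geq c r^n + O(r^{\tilde{n}}),
\]
which is strictly positive for all sufficiently small $r$ since $\tilde{n} > n$. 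Hence $\tilde{z}_j(r) \notin \M(\tq)$, and the contrapositive is established. The main technical obstacle is verifying that Lemma~\ref{lemm:phatisnice}'s proof remains valid when the perturbation $bz^{\tilde{n}}$ is followed by arbitrarily many further Taylor terms (possibly making $\tq$ transcendental); however, direct inspection shows that the argument only uses the shape of \eqref{eq:modp} and the smallness of $\tq - q$, so the adaptation is routine.
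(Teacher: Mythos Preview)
Your proof is correct, but it takes a heavier route than the paper's. The core idea is the same in both: the $cr^n$ gap in~\eqref{eq:condition} dominates the $O(r^{\tilde{n}})$ perturbation coming from $\tq - q$, so no index outside $J_q$ can belong to $J_{\tq}$. The difference is in how you handle the shift from the $q$-maximising point $z_\ell(r)$ to the $\tq$-maximising point $\tilde z_\ell(r)$ within each sector. You do this by adapting Lemma~\ref{lemm:phatisnice} to show $|q(z_\ell(r))|^2 - |q(\tilde z_\ell(r))|^2 = O(r^{\tilde n + 1/2})$; this adaptation is indeed routine (the only change is that the error in $|\tq|^2 = |q|^2 + h + O(\cdot)$ is $O(r^{\tilde n + 1})$ rather than $O(r^{\tilde n + k})$, which is harmless). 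The paper instead sidesteps this entirely: since $z_{j'}(r)$ is by construction the in-sector maximiser of $|q|$ (Theorem~\ref{th:1}\ref{item:localmax}), one has the one-line inequality $|q(z_{j'}(r))|^2 \geq |q(\tilde z_{j'}(r))|^2$, and then the crude triangle-inequality bound $\bigl||\tq(z)|^2 - |q(z)|^2\bigr| \leq 2Kr^{\tilde n}|q(z)| + K^2 r^{2\tilde n}$ is enough to chain through to a contradiction. So your argument gives a sharper quantitative estimate on $|q(z_\ell(r))| - |q(\tilde z_\ell(r))|$ (which might be useful elsewhere), at the cost of invoking the full machinery of Lemma~\ref{lemm:phatisnice}; the paper's argument is shorter and more self-contained, needing only the sector-local-max property and nothing about the rate at which $\tilde z_\ell(r)$ approaches $z_\ell(r)$.
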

\begin{proof}
Suppose, by way of contradiction, that there exists $j' \in J_{\tq} \setminus J_q$. Choose $j \in J_{q}$, and let $r > 0$ be small. Let $\{\tilde{\gamma}\}_{\ell\in \Sigma}$ be the set of curves provided by Theorem \ref{th:1} applied to $\tq$. For all sufficiently small values of $r$, let $\tilde{z}_{\ell}(r)$ denote the point on $\tilde{\gamma}_{\ell}$ of modulus $r$. 

By Theorem \ref{th:1}\ref{item:localmax}, since both $\gamma_{j'}$ and $\tilde{\gamma}_{j'}$ are contained in the same sector $S_{j'}(r,\phi)$ for some $r, \phi>0$, and $\gamma_{j'}=\M(q\vert_{S_{j'}(r,\phi)})$, we have that 
\begin{equation}
\label{eq:insector}
|q(z_{j'}(r))|^2 \geq |q(\tilde{z}_{j'}(r))|^2, \qfor r > 0 \text{ small}.
\end{equation}

By \eqref{eq:gandq}, \eqref{eq:condition}, and \eqref{eq:insector}, there are positive constants $c, K$ such that, for small values of $r > 0$, we have
\begin{align*}
|\tq(\tilde{z}_{j'}(r))|^2 &\leq |q(\tilde{z}_{j'}(r))|^2   +  K^2r^{2\tilde{n}}+ 2Kr^{\tilde{n}}|q(\tilde{z}_{j'}(r))|, \\
                          &\leq |q(z_{j'}(r))|^2             +  K^2r^{2\tilde{n}}+ 2Kr^{\tilde{n}}|q(\tilde{z}_{j'}(r))|, \\
                          &\leq |q(z_{j}(r))|^2 - cr^n   +  K^2r^{2\tilde{n}}+ 2Kr^{\tilde{n}}|q(\tilde{z}_{j'}(r))|, \\
                          &\leq |\tq(z_{j}(r))|^2 - cr^n + 2K^2r^{2\tilde{n}}+ 2Kr^{\tilde{n}}(|q(\tilde{z}_{j'}(r))| + |\tq(z_{j}(r))|).
\end{align*}
For sufficiently small values of $r$ this is a contradiction, since $n < \tilde{n}$ and for such values
\[
|\tq(\tilde{z}_{j'}(r))|^2 \geq |\tq(z_{j}(r))|^2.\qedhere
\]
\end{proof}
%
%
%
\section{Minimal polynomials} \label{S:th2}
In this section we introduce the notion of minimal polynomials, which is closely related to the definition of exceptional ones.
\begin{definition}\label{def_minimal}
Let $p(z) \defeq 1 + az^k + \sum_{\sigma=k+1}^{N} b_\sigma z^{\sigma}$ be a polynomial. We say that $p$ is \emph{minimal} if for all $m \in \{1, \ldots, 2k-3\} $, $m' \in \Z$, and $\sigma \in \{k+1, \ldots, N\}$ such that $b_\sigma \ne 0$,
\begin{equation}\label{eq_min}
m\pi \neq\frac{k}{\sigma}(m'\pi - \arg b_\sigma) + \arg a. 
\end{equation}
\end{definition}
Note that a two-term polynomial is minimal by default, and that, indeed, a polynomial is minimal if and only if it is not exceptional.

The following simple lemma is critical to our arguments in this section, and indeed underlies the definition of ``exceptional''.
\begin{lemma} \label{obs_minimal} Let $p(z) \defeq 1 + az^k + \sum_{\sigma=k+1}^{N} b_\sigma z^{\sigma}$ be a polynomial.
\begin{enumerate}[(a)]
\item For each $k<n\leq N$, define $p_n(z) \defeq 1 + az^k + \sum_{\sigma=k+1}^{n} b_\sigma z^{\sigma}.$ If $p$ is minimal, so is $p_n$ for all $k<n\leq N$, and $p$ is not exceptional.
\item If $f$ is entire and not exceptional, then its core polynomial $p$ is minimal.
\end{enumerate}
\end{lemma}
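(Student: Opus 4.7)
The plan is a direct definition chase; the only subtlety worth flagging is that the exceptional condition in Definition \ref{def:exceptional} quantifies $\sigma$ over $\{k+1,\ldots,N^\star\}$ where $N^\star$ is the index of the core polynomial, whereas the minimality condition in Definition \ref{def_minimal} quantifies $\sigma$ up to the degree of the polynomial in question. With this bookkeeping in mind, each implication reduces to restricting or expanding the range of $\sigma$.

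For part (a), I would first observe that for each $k<n\leq N$ the polynomial $p_n$ agrees with $p$ in its coefficients $b_\sigma$ on its support, so its minimality amounts to checking \eqref{eq_min} on the subset $\{k+1,\ldots,n\}$ of the range $\{k+1,\ldots,N\}$ on which $p$ is already known to satisfy \eqref{eq_min}; the first assertion is then immediate. For the second assertion, let $N^\star\leq N$ denote the index of the core polynomial of $p$. Then the negation \eqref{eq:notmagic} of \eqref{eq_min} that features in Definition \ref{def:exceptional} quantifies $\sigma$ over the subset $\{k+1,\ldots,N^\star\}$ of $\{k+1,\ldots,N\}$, and the minimality of $p$ rules out any witness in the larger range, hence in particular in the smaller one, so $p$ is not exceptional.

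For part (b), I would simply unfold the hypothesis: if $f$ is not exceptional with core polynomial $p_{N^\star}$, then by Definition \ref{def:exceptional} relation \eqref{eq:notmagic} fails for every admissible $\sigma\in\{k+1,\ldots,N^\star\}$. Since $p_{N^\star}$ is a polynomial of degree $N^\star$ whose coefficients coincide with those of $f$ up to index $N^\star$, this is exactly the minimality condition \eqref{eq_min} for $p_{N^\star}$ applied with the choice $N=N^\star$ in Definition \ref{def_minimal}; hence $p_{N^\star}$ is minimal.

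I do not expect any genuine obstacle here; the only thing to be attentive to is keeping track of which $N$ is in play at each step, namely the degree of the polynomial under consideration for minimality versus the core polynomial index of $f$ for exceptionality. Once the quantifier ranges are correctly identified, both implications follow at a glance.
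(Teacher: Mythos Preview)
Your proposal is correct and follows essentially the same definition-chase as the paper's proof. If anything, you are slightly more careful than the paper in distinguishing the core-polynomial index $N^\star$ from the degree $N$ when verifying that a minimal polynomial is not exceptional; the paper simply asserts that \eqref{eq_min} blocks \eqref{eq:notmagic} without making the inclusion $\{k+1,\ldots,N^\star\}\subset\{k+1,\ldots,N\}$ explicit.
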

\begin{proof}
To prove (a) note first that the fact that, for $k<n\leq N$, $p_n$ is minimal when $p$ is follows from the definition of minimal, where in \eqref{eq_min} the variable $\sigma$ might take fewer values for $p_n$ than for $p$. Since \eqref{eq_min} holds for a minimal polynomial $p$, \eqref{eq:notmagic} cannot hold for $p$, and so $p$ cannot be exceptional.\\
For (b) suppose that $f$ is an entire map which is not exceptional. Then, by Definition \ref{def:exceptional}, its core polynomial $p$ is not exceptional, and \eqref{eq_min} must hold for $p$. Hence, $p$ is minimal.
\end{proof}
The following result on minimal polynomials is key to the proof of Theorem~\ref{th:2new}.
\begin{theorem}
\label{th:2}
If $p$ is a minimal polynomial,
\begin{enumerate}[(i)]
\item\label{th:bigger} there exist $c > 0$ and $R$ such that \eqref{eq:condition} holds with $p$ in place of $q$.
\item\label{th:symmetry} There exists $j \in \Sigma$ such that $J_p = [j]_p$. 
\end{enumerate}
\end{theorem}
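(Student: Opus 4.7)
The plan is to establish Theorem~\ref{th:2} by induction on the number of nonzero terms of $p$. The base case $p(z) = 1 + az^k$ is immediate: here $\id = k$, so $[j]_p = \Sigma$ for every $j$, and $J_p = \Sigma$ by symmetry; part~\ref{th:bigger} is then vacuous and part~\ref{th:symmetry} is trivial.

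For the inductive step, I would assume $p$ has at least three nonzero terms and write $p = \hp + bz^n$ as in \eqref{eq:hpdef}. Lemma~\ref{obs_minimal}(a) guarantees that $\hp$ is again minimal, so the inductive hypothesis applied to $\hp$ (of degree $\hat n < n$) yields $j^* \in \Sigma$, $c_0 > 0$, and $R_0 > 0$ with $J_{\hp} = [j^*]_{\hp}$ and $|\hp(\hz_j(r))|^2 \geq |\hp(\hz_{j'}(r))|^2 + c_0 r^{\hat n}$ whenever $j \in J_{\hp}$, $j' \in \Sigma \setminus J_{\hp}$ and $0 < r \leq R_0$. Applying Lemma~\ref{lemm:general} with $q = \hp$ and $\tq = p$ then gives $J_p \subseteq J_{\hp} = [j^*]_{\hp}$.

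Next I would analyse the constants $t_j = 2|b|\cos(n\omega_j + \arg b)$ on $[j^*]_{\hp}$. By Lemma~\ref{lemm:equals}, $t_j = t_{j'}$ holds iff one of \eqref{poss1} or \eqref{poss2} does. For distinct $j, j' \in \Sigma$ we have $j + j' \in \{1, \ldots, 2k-3\}$, so the minimality of $p$ (applied in Definition~\ref{def_minimal} with $\sigma = n$, $b_\sigma = b$) excludes \eqref{poss2}; combined with Lemma~\ref{lemm:reduce}, this shows that $j \mapsto t_j$ is constant on each $[\cdot]_p$-coset contained in $[j^*]_{\hp}$ and takes distinct values on distinct such cosets. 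Hence there is a unique $[\cdot]_p$-coset $[j^{**}]_p \subseteq [j^*]_{\hp}$ on which $t_j$ is maximised. Lemma~\ref{lem_newtj} then yields, for $j \in [j^{**}]_p$ and $j' \in [j^*]_{\hp} \setminus [j^{**}]_p$, the estimate $|p(z_j(r))|^2 - |p(z_{j'}(r))|^2 = (t_j - t_{j'})r^n + O(r^{n+1/2}) \geq c_1 r^n$ for some $c_1 > 0$ and small $r$. This forces each such $j'$ out of $J_p$, so $J_p \subseteq [j^{**}]_p$; since Theorem~\ref{th:1}\ref{th:number} shows $J_p$ to be a nonempty union of $[\cdot]_p$-cosets, we conclude $J_p = [j^{**}]_p$, proving \ref{th:symmetry} and the case $j' \in J_{\hp} \setminus J_p$ of \ref{th:bigger}.

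It remains to handle \ref{th:bigger} when $j' \in \Sigma \setminus J_{\hp}$. For $j \in J_p \subseteq J_{\hp}$, I would combine the inductive estimate with Lemma~\ref{lemm:phatisnice} (which gives $|\hp(z_j(r))|^2 \geq |\hp(\hz_j(r))|^2 - O(r^{n+1/2})$) and the sectorial maximality $|\hp(z_{j'}(r))|^2 \leq |\hp(\hz_{j'}(r))|^2$ from Theorem~\ref{th:1}\ref{item:localmax} applied to $\hp$, to deduce $|\hp(z_j(r))|^2 - |\hp(z_{j'}(r))|^2 \geq \tfrac{c_0}{2} r^{\hat n}$ for small $r$. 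Since $|p(w)|^2 - |\hp(w)|^2 = O(|w|^n)$ by Observation~\ref{obs:approx} and $r^{\hat n}$ dominates $r^n$ as $r \to 0^+$, this yields $|p(z_j(r))|^2 - |p(z_{j'}(r))|^2 \geq c_2 r^n$ for some $c_2 > 0$, completing \ref{th:bigger}. I expect the main obstacle to be the bookkeeping needed to rule out case \eqref{poss2} of Lemma~\ref{lemm:equals} across all relevant pairs: it is precisely the $m \in \{1, \ldots, 2k-3\}$ restriction in Definition~\ref{def_minimal} that makes the maximising $[\cdot]_p$-coset unique, and without it both conclusions of the theorem could fail.
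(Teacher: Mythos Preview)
Your proposal is correct and follows essentially the same inductive strategy as the paper's proof: same base case, same use of Lemma~\ref{lemm:general} to obtain $J_p \subset J_{\hp}$, same appeal to Lemmas~\ref{lem_newtj}--\ref{lemm:reduce} and minimality to identify $J_p$ with a single $[\cdot]_p$-coset. Your write-up is in places slightly more explicit than the paper's---you spell out why $j+j' \in \{1,\ldots,2k-3\}$ so that minimality excludes \eqref{poss2}, and in the $j' \notin J_{\hp}$ case you carefully transfer the inductive bound from the $\hz_j(r)$ to the $z_j(r)$ via Lemma~\ref{lemm:phatisnice} and sectorial maximality, where the paper simply invokes Observation~\ref{obs:approx}---but the logical skeleton is the same.
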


\begin{proof}[Proof of Theorem~\ref{th:2}]
We prove the result by induction on the number of non-zero terms in $p$. When $p$ contains $2$ non-zero terms, then we have that
\[
p(z) = 1 + az^k.
\]
Clearly, the maximum modulus of $p$ is achieved exactly when $a z^k$ is real and positive, in other words, when $\arg a + k \arg z$ is a multiple of $2\pi$. Then $J_p = \Sigma$, and so Theorem~\ref{th:2}\ref{th:bigger} holds trivially. Note that Theorem~\ref{th:2}\ref{th:symmetry} is also straightforward.

Now suppose the theorem has been proved for up to $\ell$ non-zero terms, and $p$ has $\ell+1$ non-zero terms. Let $\hp$ be the polynomial defined in \eqref{eq:hpdef}. Note that $\hp$ has fewer terms than $p$, and since $p$ is minimal, by Lemma \ref{obs_minimal}, so is $\hp$. Hence, we can assume that both the inductive conclusions apply to $\hp$. 

Observe that, by the inductive hypothesis, and by the definitions of $p$ and $\hp$, the conditions of Lemma~\ref{lemm:general} are satisfied, with $\hp$ in place of $q$ and $p$ in place of $\tq$. Hence $J_p \subset J_{\hp}$.

\begin{claim} Set $\tk \defeq \max \{ t_j : j \in J_{\hp} \}$. Then
\begin{equation} \label{eq_Jp}
J_p = \{ j \in J_{\hp} : t_j = \tk \}.
\end{equation}
\end{claim}
\begin{subproof}
Note first, by Lemma \ref{lem_newtj}, that if $j, j' \in J_{\hp}$, then \eqref{eq:diff} holds. This implies that if $t_{j'} < t_j$, then $j' \notin J_p$. Hence $J_p \subset \{ j \in J_{\hp} : t_j = \tk \}$.
 
For the reverse inclusion, choose $j' \in J_p$, so that $t_{j'} = \tk$. Suppose that $j \in J_{\hp}$ and that $t_j = \tk$. We need to show that $j \in J_p$. Since $t_j = t_{j'}$, it follows from Lemma~\ref{lemm:equals} that either \eqref{poss1} or \eqref{poss2} hold. Since, $p$ is minimal, \eqref{poss2} cannot hold. By Theorem~\ref{th:2}\ref{th:symmetry} applied to $\hp$, we know that $J_{\hp} = [j]_{\hp}$, and so $j' \in J_p \subset J_{\hp} = [j]_{\hp}$. Since \eqref{poss1} holds, we may apply Lemma~\ref{lemm:reduce} to conclude that $j \in [j']_p$ and the claim follows from Theorem \ref{th:1}\ref{th:number}.
\end{subproof}

We now show that Theorem~\ref{th:2}\ref{th:bigger} holds for $p$. To see this, let $j \in J_p$ and $j' \notin J_p$. If $j'\in J_{\hat{p}}$, then $t_{j'} < t_j$ by \eqref{eq_Jp} and Theorem~\ref{th:2}\ref{th:bigger} follows from \eqref{eq:diff}. If $j' \notin J_{\hat{p}}$, then, since $j\in J_{\hat{p}}$, and $\hat{p}$ has fewer terms than $p$, by our inductive assumption \eqref{eq:condition} holds with $q$ replaced by $\hat{p}$ and $n$ replaced by $m$, the degree of $\hat{p}$. Since $m < n$, this latter fact combined with Observation \ref{obs:approx} yields \eqref{eq:condition} with $q$ replaced by $p.$

It remains to show that Theorem~\ref{th:2}\ref{th:symmetry} holds for $p$. Fix $j, j' \in J_p$. We are required to show that $j' \in [j]_p$. We have that $j'\in J_p \subset J_{\hat{p}}=[j]_{\hat{p}}$ by our inductive assumption. Since $p$ is minimal we can deduce that \eqref{poss1} holds. Now by \eqref{eq_Jp} we can  apply Lemma~\ref{lemm:reduce} to conclude $j' \in [j]_p$.
\end{proof}

%
%
\section{Proof of Theorem~\ref{th:2new}}
\label{S:trans}
It remains to use Theorem \ref{th:2} to complete the proof of Theorem~\ref{th:2new}. This is, in fact, quite straightforward. Suppose that $f$ is entire and not exceptional, and is of the form \eqref{eq:p}. Let $p$ be its core polynomial, that by Lemma \ref{obs_minimal} is minimal and has the same inner degree as $f$. Then, by Theorem~\ref{th:2}\ref{th:symmetry}, there exists $j \in \Sigma$ such that $J_p =[j]_p$, and, in particular, $\#J_p=\mu_p.$

Observe that, by Theorem~\ref{th:2}\ref{th:bigger} applied to $p$, and by the definitions of $f$ and $p$, the conditions of Lemma~\ref{lemm:general} are satisfied, with $p$ in place of $q$ and $f$ in place of $\tq$. Hence $J_f \subset J_{p}$. The result now follows since, by Theorem~\ref{th:1}, $J_f$ contains at least $\mu_f$ elements, and $\mu_f = \mu_p$ by definition of $p.$
%
%
\section{Proof of Theorem~\ref{th:3}}
\label{S:th3}

\begin{figure}[htb]
    \centering
    \begin{subfigure}[b]{\textwidth}
        \centering
        \includegraphics[width=0.45\linewidth]{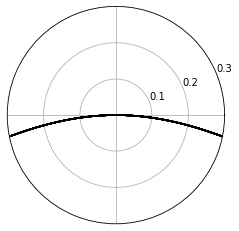}%
        \hfill
        \includegraphics[width=0.45\linewidth]{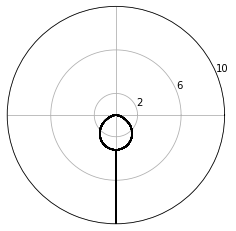}
        \caption{$p(z) \defeq 1+z^2+i z^3$}
    \end{subfigure}
    \vskip\baselineskip
    \begin{subfigure}[b]{\textwidth}
        \centering
        \includegraphics[width=0.45\linewidth]{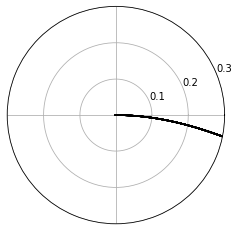}%
        \hfill
        \includegraphics[width=0.45\linewidth]{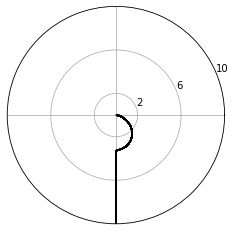}
        \caption{$\tilde{p}(z) \defeq 1+z^2+(i+0.001)z^3$}
    \end{subfigure}
    \caption{\label{fig1} Computer generated graphics of $\M(p)$ and $\M(\tilde{p})$ near the origin. By Theorem \ref{th:3}, the polynomial $p$ is magic and so $\M(p)$ has two components near the origin. The polynomial $\tilde{p}$, which is only a small perturbation of $p$, is not magic, and so $\M(\tilde{p})$ has only one component near the origin.}
\end{figure}

In this section we prove Theorem \ref{th:3}. Suppose that $p$ is a polynomial of the form \eqref{eq:p}, and let $n$ be its degree. Observe that if $k=n$, then its inner degree $\id=k$, and so by Theorem~\ref{th:1}\ref{th:number}, $\#J_p=\id$ and $p$ is not magic. If $k=1$, then by Theorem \ref{th:1}, $\#J_p=1$ and $p$ is not magic. In particular, these are the only two possible cases for quadratic polynomials, and so they cannot be magic. Likewise, if $p$ is cubic, then it can be magic only if $k=2$.

Suppose that $p$ is a cubic polynomial, and that $k=2$; that is, $p(z)=1+az^2+bz^3$, where $a, b \ne 0$. Let $\beta\in \C$ be such that $a\beta^2 = 1$, and set $b' \defeq b \beta^{3} = ba^{-3/2}$. It is easier to consider the polynomial
\begin{equation}\label{eq_pw}
q(z) \defeq p(z\beta) = 1 + z^2 + b'z^3.
\end{equation}

Let $z = re^{i\theta}$ so that $-\overline{z} = re^{i(\pi - \theta)}$, and set $\phi \defeq \arg b'$. By an application of Lemma~\ref{lemm:modulus}, together with standard trigonometric formulae, we can calculate that
\begin{align*}
|q(z)|^2 - |q(-\overline{z})|^2 &= 2r^3|b'|\left(\cos(3\theta\!+\!\phi)\!+\! r^2\cos(\theta \!+\! \phi)\!+\! \cos(3\theta\!-\! \phi)\!+\! r^2\cos(\theta \!-\! \phi)\right) \\
&= 4r^3|b'| \ \cos\phi \ (\cos 3\theta + r^2 \cos \theta).
\end{align*}

Suppose that $|\theta|$ is small, which is the case if $z$ is near the positive real axis. If the real part of $b'$ is positive, then $\cos\phi$ is positive, and we can deduce that $|q(z)|^2 > |q(-\overline{z})|^2$. By Theorem \ref{th:1}\ref{th:tangent}, near the origin, $\M(q)$ lies near the real axis. It follows that $\M(q)$ has only one component near the origin, which is asymptotic to the positive real axis, and $q$ is not magic. 

If the real part of $b'$ is negative, then $\cos\phi$ is negative, and we can deduce that $|q(z)|^2 < |q(-\overline{z})|^2$. As above, it follows that $\M(q)$ has only one component near the origin, which is asymptotic to the negative real axis, and again $q$ is not magic. 

Finally, if $b'$ is imaginary, then $\cos\phi = 0$ and $|q(z)| = |q(-\overline{z})|$; in fact we have $q(z) = \overline{q(-\overline{z})}$. It follows that $\M(q)$ has exactly two components near the origin, which are obtained from each other by reflection in the imaginary axis. In particular, $q$ is magic. 

Since $z \in \M(q)$ if and only if $z\beta \in \M(p)$, we can deduce that $p$ is magic if and only if $b'$ is imaginary, as required.

\bibliographystyle{alpha}
\bibliography{HaymanReferences}

\begin{thebibliography}{HTW13}

\bibitem[Blu07]{Blumenthal}
O.~Blumenthal.
\newblock Sur le mode de croissance des fonctions enti\`eres.
\newblock {\em Bull. Soc. Math. France}, 35:213--232, 1907.

\bibitem[COS90]{Csordas_1990}
G.~Csordas, M.~Ortel, and W.~Smith.
\newblock The maximum modulus function of a polynomial.
\newblock {\em Complex Variables Theory Appl.}, 15(2):107--114, 1990.

\bibitem[Har09]{hardy1909}
G.~H. Hardy.
\newblock The maximum modulus of an integral function.
\newblock {\em Quarterly J. of Math.}, 41:1--9, 1909.

\bibitem[Hay51]{Hayman}
W.~K. Hayman.
\newblock A characterization of the maximum modulus of functions regular at the
  origin.
\newblock {\em J. Analyse Math.}, 1:135--154, 1951.

\bibitem[HTW13]{Blu_conj}
W.~K. Hayman, T.~F. Tyler, and D.~J. White.
\newblock The {B}lumenthal conjecture.
\newblock In {\em Complex analysis and dynamical systems {V}}, volume 591 of
  {\em Contemp. Math.}, pages 149--157. Amer. Math. Soc., Providence, RI, 2013.

\bibitem[JL86]{jassimlondon}
S.~A. Jassim and R.~R. London.
\newblock On the maximum modulus paths of a certain cubic.
\newblock {\em Quart. J. Math. Oxford Ser. (2)}, 37(146):189--191, 1986.

\bibitem[PS20]{letidave}
L.~{Pardo-Sim{\'o}n} and D.~J. Sixsmith.
\newblock Variations on a theme of {H}ardy concerning the maximum modulus.
\newblock {\em Bulletin of the London Mathematical Society}, 52(6):1134--1147,
  2020.

\bibitem[PS21]{letidavenew}
L.~{Pardo-Sim{\'o}n} and D.~J. Sixsmith.
\newblock The {M}aximum {M}odulus {S}et of a {P}olynomial.
\newblock {\em Comput. Methods Funct. Theory}, 2021.

\bibitem[Six15]{Sixsmithmax}
David~J. Sixsmith.
\newblock Maximally and non-maximally fast escaping points of transcendental
  entire functions.
\newblock {\em Math. Proc. Cambridge Philos. Soc.}, 158(2):365--383, 2015.

\bibitem[Tyl00]{tyler}
T.~F. Tyler.
\newblock Maximum curves and isolated points of entire functions.
\newblock {\em Proc. Amer. Math. Soc.}, 128(9):2561--2568, 2000.

\bibitem[Val49]{valironlectures}
G.~Valiron.
\newblock {\em Lectures on the general theory of integral functions}.
\newblock Chelsea, 1949.

\end{thebibliography}
\end{document}